\newcommand{\Dic}{\mathrm{Dic}}
  \edef\mtht{\the\textheight}
  \edef\mtwd{\the\textwidth}
\tikzset{
  commutative diagrams/.cd, 
  arrow style=tikz, 
  diagrams={>=stealth}
}
\space\href{https://doi.org/#1}{#1}
\space\href{http://arxiv.org/\abx@arxivpath/#1}{#1}
\space\href{http://www.ams.org/mathscinet-getitem?mr=MR#1}{#1}
\space\href{http://zbmath.org/?q=an:#1}{#1}
\newcommand{\printreferences}{\raggedright\printbibliography[heading=bibintoc]}
\ifundef{\abstract}{}{\patchcmd{\abstract}%
    {\quotation}{\quotation\noindent\ignorespaces}{}{}}
\numberwithin{equation}{section}
\renewcommand{\qedsymbol}{$\blacksquare$}
\newcommand{\CorollaryQED}{\qedsymbol}
\newcommand{\ConjectureQED}{$\square$}
\newcommand{\SituationQED}{$\times$}
\newcommand{\DefinitionQED}{$\bullet$}
\newcommand{\NotationQED}{$\circ$}
\newcommand{\ExampleQED}{$\spadesuit$}
\newcommand{\RemarkQED}{$\clubsuit$}
\declaretheorem[numberlike=equation,]{theorem}
\declaretheorem[numbered=no,name=Theorem]{theorem*}
\declaretheorem[numberlike=equation,name=Lemma]{lemma}
\declaretheorem[numberlike=equation,name=Proposition]{prop}
\declaretheorem[numberlike=equation,name=Situation,style=definition,qed=\SituationQED]{situation}
\declaretheorem[numberlike=equation,name=Definition,style=definition,qed=\DefinitionQED]{definition}
\declaretheorem[numbered=no,name=Definition,style=definition,qed=\DefinitionQED]{definition*}
\declaretheorem[numberlike=equation,style=definition,qed=\ExampleQED]{example}
\declaretheorem[numberlike=equation,style=remark,qed=\RemarkQED]{remark}
\declaretheorem[numbered=no,style=remark,name=Remark,qed=\RemarkQED]{remark*}
\def\makeautorefname#1#2{\AtBeginDocument{\expandafter\def\csname#1autorefname\endcsname{#2}}}
\numberwithin{substep}{step}
\setlist[description]{leftmargin=!,labelindent=1em}
\setlist[enumerate]{label={\rm (\arabic*)},ref=\arabic*}
\setlist[enumerate,2]{label={\rm (\alph*)},ref=\theenumi.\alph*}
\setlist[enumerate,3]{label={\rm (\roman*)},ref=\theenumii.\roman*}
\let\C\undefined
\let\U\undefined
\DeclareFontFamily{U}{mathx}{\hyphenchar\font45}
\DeclareFontShape{U}{mathx}{m}{n}{
      <5> <6> <7> <8> <9> <10>
      <10.95> <12> <14.4> <17.28> <20.74> <24.88>
      mathx10
      }{}
\DeclareSymbolFont{mathx}{U}{mathx}{m}{n}
\DeclareMathAccent{\widecheck}{0}{mathx}{"71}
\DeclareMathAccent{\wideparen}{0}{mathx}{"75}
\DeclareMathOperator{\Diff}{Diff}
\DeclareMathOperator{\End}{End}
\DeclareMathOperator{\GL}{GL}
\DeclareMathOperator{\HF}{\HF}
\DeclareMathOperator{\Hom}{Hom}
\DeclareMathOperator{\Isom}{Isom}
\DeclareMathOperator{\Map}{Map}
\DeclareMathOperator{\Sym}{Sym}
\DeclareMathOperator{\coker}{coker}
\DeclareMathOperator{\im}{im}
\DeclareMathOperator{\inj}{inj}
\DeclareMathOperator{\tra}{tra}
\DeclarePairedDelimiter\paren{\lparen}{\rparen}
\DeclarePairedDelimiter\sqparen{[}{]}
\DeclarePairedDelimiter{\Abs}{\|}{\|}
\DeclarePairedDelimiter{\Inner}{\langle}{\rangle}
\DeclarePairedDelimiter{\abs}{\lvert}{\rvert}
\DeclarePairedDelimiter{\bracket}{\langle}{\rangle}
\DeclarePairedDelimiter{\set}{\lbrace}{\rbrace}
\def\({\left(}
\def\){\right)}
\def\<{\left\langle}
\def\>{\right\rangle}
\newcommand{\C}{{\mathbf{C}}}
\newcommand{\Gtwo}{G_2}
\newcommand{\N}{{\mathbf{N}}}
\newcommand{\RP}{{\R P}} 
\newcommand{\R}{\mathbf{R}}
\newcommand{\SO}{\mathrm{SO}}
\newcommand{\Span}[1]{\bracket{#1}}
\newcommand{\Sp}{\mathrm{Sp}}
\newcommand{\U}{\mathrm{U}}
\newcommand{\YM}{\mathrm{YM}}
\newcommand{\Z}{\mathbf{Z}}
\newcommand{\co}{\mskip0.5mu\colon\thinspace}
\newcommand{\dR}{\mathrm{dR}}
\newcommand{\defined}[2][\key]{\def\key{#2}\textbf{#2}\index{#1}}
\newcommand{\delbar}{\bar{\del}}
\newcommand{\del}{\partial}
\newcommand{\gl}{\mathfrak{gl}}
\newcommand{\hkred}{{/\!\! /\!\! /}}
\newcommand{\id}{\mathrm{id}}
\newcommand{\incl}{\hookrightarrow}
\newcommand{\imm}{\looparrowright}
\newcommand{\iso}{\cong}
\newcommand{\one}{\mathbf{1}}
\newcommand{\pr}{\mathrm{pr}}
\newcommand{\qandq}{\quad\text{and}\quad}
\newcommand{\qforq}{\quad\text{for}\quad}
\newcommand{\qand}{\quad\text{and}}
\newcommand{\qforeveryq}{\quad\text{for every}\quad}
\newcommand{\qwithq}{\quad\text{with}\quad}
\newcommand{\vol}{\mathrm{vol}}
\renewcommand{\H}{\mathbf{H}}
\renewcommand{\Im}{\operatorname{Im}}
\renewcommand{\P}{\mathbf{P}}
\renewcommand{\emptyset}{\varnothing}
\renewcommand{\epsilon}{\varepsilon}
\renewcommand{\setminus}{{\backslash}}
\renewcommand{\leq}{\leqslant}
\renewcommand*\env@matrix[1][*\c@MaxMatrixCols c]{%
  \hskip -\arraycolsep
  \let\@ifnextchar\new@ifnextchar
  \array{#1}}
\renewcommand\xleftrightarrow[2][]{%
  \ext@arrow 9999{\longleftrightarrowfill@}{#1}{#2}}
\newcommand\longleftrightarrowfill@{%
  \arrowfill@\leftarrow\relbar\rightarrow}
\newcommand{\rd}{{\rm d}}
\newcommand{\rH}{{\rm H}}
\newcommand{\bE}{{\mathbf{E}}}
\newcommand{\bI}{{\mathbf{I}}}
\newcommand{\bP}{{\mathbf{P}}}
\newcommand{\bQ}{{\mathbf{Q}}}
\newcommand{\bT}{{\mathbf{T}}}
\newcommand{\bU}{{\mathbf{U}}}
\newcommand{\bV}{{\mathbf{V}}}
\newcommand{\sL}{\mathscr{L}}
\newcommand{\sN}{\mathscr{N}}
\newcommand{\sS}{\mathscr{S}}
\newcommand{\sT}{\mathscr{T}}
\newcommand{\sV}{\mathscr{V}}
\newcommand{\fg}{{\mathfrak g}}
\newcommand{\fu}{{\mathfrak u}}
\newcommand{\fz}{{\mathfrak z}}
\newcommand{\fP}{{\mathfrak P}}
\newcommand{\fR}{{\mathfrak R}}
\newcommand{\bomega}{{\bm{\omega{}}}}
\newcommand{\bzeta}{{\bm\zeta}}
\author{
  Shubham Dwivedi
  \and
  Daniel Platt 
  \and
  \and
  Thomas Walpuski 
}
\title{
  Associative submanifolds in Joyce's generalised Kummer constructions
}
\date{2022-02-01}
\begin{document}

\maketitle

\begin{abstract}
  This article constructs examples of associative submanifolds in $\Gtwo$--manifolds obtained by resolving $\Gtwo$--orbifolds using Joyce's generalised Kummer construction.
  As the $\Gtwo$--manifolds approach the $\Gtwo$--orbifolds,
  the volume of the associative submanifolds tends to zero.
  This partially verifies a prediction due to \citeauthor{Halverson2015}.
\end{abstract}

\section{Introduction}
\label{Sec_Introduction}

The Teichmüller space
\begin{equation*}
  \sT(Y)
  \coloneq
  \set{
    \phi \in \Omega^3(Y) : \phi ~\textnormal{is a torsion-free $\Gtwo$--structure}
  }/\Diff_0(Y)
\end{equation*}
of torsion-free $\Gtwo$--structures on a closed $7$--manifold $Y$ is a smooth manifold of dimension $b^3(Y)$ \cite[Theorem C]{Joyce1996}.
The $\Gtwo$ period map $\Pi \co \sT \to \rH_{\dR}^3(Y) \oplus \rH_{\dR}^4(Y)$ defined by
\begin{equation*}
  \Pi\paren*{\phi \cdot \Diff_0(Y)}
  \coloneq
  ([\phi],[\psi])
  \qwithq
  \psi \coloneq *_\phi\phi
\end{equation*}
is a Lagrangian immersion \cites[Lemma 1.1.3]{Joyce1996a}.%
\footnote{%
  Whether or not $\Pi$ is an embedding is an open question.
}
It is constrained by the following inequalities \cites[Lemma 1.1.2]{Joyce1996a}[§IV.2.A and §IV.2.B]{Harvey1982}:
\begin{enumerate}
\item
  \label{It_Topological}
  ${\displaystyle \int_Y \alpha \wedge \alpha \wedge \phi < 0}$
  ~
  for every non-zero $[\alpha] \in \rH_{\dR}^2(Y)$ if $\pi_1(Y)$ is finite.
\item
  \label{It_Instanton}
  ${\displaystyle \int_Y p_1(V) \wedge \phi = -\tfrac{1}{4\pi^2} \YM(A) < 0}$
  ~
  for every vector bundle $V$ which admits a non-flat $\Gtwo$--instanton $A$;
  in particular, for $V = TY$ unless $Y$ is covered by $T^7$.
\item
  \label{It_Associative}
  ${\displaystyle \int_P \phi = \vol(P) > 0}$
  ~
  for every associative submanifold $P \imm Y$.
\item
  \label{It_Coassociative}
  ${\displaystyle \int_Q \psi = \vol(Q) > 0}$
  ~
  for every coassociative submanifold $Q \imm Y$.
\end{enumerate}
These should be compared with the inequalities cutting out the Kähler cone of a Calabi--Yau $3$--fold.

By analogy with Calabi--Yau $3$--folds,
\citet[§3]{Halverson2015} suggest that the above inequalities completely characterise the ideal boundary of $\sT(Y)$.
Of course, making this precise is complicated by the fact that the notions of $\Gtwo$--instanton and (co)associative submanifold depend on the $\Gtwo$--structure $\phi$.
The situation would be improved if there were invariants whose non-vanishing guaranteed the existence of $\Gtwo$--instantons and (co)associative submanifolds as suggested by \citet[§3]{Donaldson1998}.
However, their construction is fraught with enormous difficulty \cite{Donaldson2009,Joyce2016,Haydys2017,Walpuski2013a,Doan2017d}.

A more down to earth problem is to exhibit concrete examples of degenerating families of $\Gtwo$--manifolds which admit $\Gtwo$--instantons whose Yang--Mills energies tend to zero \cite{Walpuski2011} or which admit (co)associative submanifolds whose volumes tend to zero.
The purpose of this article is to present examples of the latter in  $\Gtwo$--manifolds arising from Joyce's generalised Kummer construction.
Although these examples had been anticipated (e.g. by \citet[§6.2]{Halverson2015}),
their rigorous construction has only recently become possibly due to the work of \citet{Platt2020}.

\begin{remark}
  Of course,
  there are already numerous examples of closed associative submanifolds in the literature.
  \begin{enumerate}
  \item
    \citeauthor{Joyce1996a} \cites[§4.2]{Joyce1996a}[§12.6]{Joyce2000} has constructed (co)associative submanifolds in generalised Kummer constructions as fixed-point sets of involutions.
  \item
    \citet[§5.5 and §7.2.2]{Corti2012a} have constructed associative submanifolds in twisted connected sums using rigid holomorphic curves and special Lagrangians in asymptotically cylindrical Calabi--Yau 3--folds.
  \item
    In the physics literature,
    \citet[§4.4]{Braun2018} have proposed a construction of infinitely many associative submanifolds in certain twisted connected sums.
    An important ingredient in the proof of this conjecture will be a gluing theorem for associative submanifolds in twisted connected sums analogous to \cite{SaEarp2013}.
    Building on \cite{Braun2018},
    \citet[§2.2 and §4.2]{Acharya2019} have constructed infinitely many associative submanifolds in certain $\Gtwo$--orbifolds (without using any analytic methods).
  \item
    \citet{Lotay2014,Kawai2015,Ball2000} have produced a wealth of examples of associative submanifolds in $S^7$,
    the squashed $S^7$, and
    the Berger space with their nearly parallel $\Gtwo$--structures.
  \end{enumerate}
  The novelty of the examples discussed in the present article is that their volumes tend to zero as the ambient $\Gtwo$--manifolds degenerate.
\end{remark}

\paragraph{Acknowledgements.}

This material is based upon work supported by
\href{https://sites.duke.edu/scshgap/}{the Simons Collaboration ``Special Holonomy in Geometry, Analysis, and Physics''} (DP, TW).


\section{Joyce's generalised Kummer construction}
\label{Sec_GeneralisedKummer}

The generalised Kummer construction is a method to produce $\Gtwo$--manifolds by desingularising certain closed flat $\Gtwo$--orbifolds $(Y_0,\phi_0)$ introduced by \citet{Joyce1996,Joyce1996a}.
Besides a rather delicate singular perturbation theory it relies on the fact that the hyperkähler $4$--orbifolds $\H/\Gamma$,
obtained as quotients of the quaternions $\H$ by a finite subgroup $\Gamma < \Sp(1)$,
can be desingularised by hyperkähler $4$--manifolds.
The following model spaces feature prominently throughout this article.

\begin{example}[model spaces]
  \label{Ex_Model}
  Let $X$ be a hyperkähler $4$--orbifold with hyperkähler form
  \begin{equation*}
    \bomega \in (\Im\H)^* \otimes \Omega^2(X).
  \end{equation*}
  Denote by $\vol \in \Omega^3(\Im\H)$ and $\one \in \Omega^1(\Im\H) \otimes \Im\H$ the volume form and the tautological $1$--form respectively.
  \begin{enumerate}
  \item
    \label{Ex_Model_0}
    The $3$--form
    \begin{equation}
      \label{Eq_Phi}
      \vol - \Inner{\one \wedge \bomega} \in \Omega^3(\Im\H \times X)
    \end{equation}
    defines a torsion-free $\Gtwo$--structure on $\Im\H \times X$.
    The corresponding Riemannian metric and the cross-product on $\Im\H \times X$ recover the Riemannian metric and the hypercomplex structure $\bI \in (\Im\H)^* \otimes \Gamma(\End(TX))$ on $X$.
  \item
    \label{Ex_Model_/G}
    Let $G < \SO(\Im\H) \ltimes \Im\H$ be a Bieberbach group; that is: discrete, cocompact, and torsion-free.
    Let $\rho \co G \to \Isom(X)$ be a homomorphism.
    Suppose that $\bomega$ is $G$--invariant;
    that is: for every $(R,t) \in G$
    \begin{equation*}
      \paren[\big]{R^* \otimes \rho(R,t)^*} \bomega = \bomega.
    \end{equation*}
    Set
    \begin{equation*}
      Y \coloneq (\Im \H \times X)/G.
    \end{equation*}
    The $\Gtwo$--structure \autoref{Eq_Phi} descends to a $\Gtwo$--structure
    \begin{equation*}
      \phi \in \Omega^3(Y).
    \end{equation*}
    The canonical projection $p \co Y \to B \coloneq \Im\H/G$ is a flat fibre bundle whose fibres are coassociative and diffeomorphic to $X$;
    cf.~\cite[§3.4]{Barbosa2019}.
    \qedhere
  \end{enumerate}
\end{example}

\begin{remark}[Classification of Bieberbach groups]
  \label{Rmk_Bieberbach}
  If $G < \SO(\Im\H)\ltimes \Im\H$ is a Bieberbach group,
  then $\Lambda \coloneq G \cap \Im\H < \Im \H$ is a lattice and $H \coloneq G/\Lambda < \SO(\Lambda) \times (\Im\H/\Lambda)$ is isomorphic to either $\one$, $C_2$, $C_3$, $C_4$, $C_6$, or $C_2^2$; cf.~\cites{Hantzsche1935}{Conway2003}[§3.3]{Szczepanski2012}.
  More precisely, $G$ is among the following:
  \begin{enumerate}
  \item[($\one$)]
    \label{Rmk_Bieberbach_1}
    $\Lambda$ is arbitrary and $G = \Lambda$.
  \item[($C_2$)]
    \label{Rmk_Bieberbach_C2}
    $\Lambda = \Span{\lambda_1,\lambda_2,\lambda_3}$ with
    \begin{equation*}
      \Inner{\lambda_1,\lambda_2} = \Inner{\lambda_1,\lambda_3} = 0.
    \end{equation*}
    $G$ is generated by $\Lambda$ and $(R_2,\frac12\lambda_1)$ with $R_2 \in \SO(\Lambda)$ as in \autoref{Eq_R}.
  \item[($C_3$)]
    \label{Rmk_Bieberbach_C3}
    $\Lambda = \Inner{\lambda_1,\lambda_3,\lambda_3}$ with
    \begin{equation}
      \label{Eq_Bieberbach_C3}
      \Inner{\lambda_1,\lambda_2} = \Inner{\lambda_1,\lambda_3} = 0 \qandq
      \abs{\lambda_2}^2 = \abs{\lambda_3}^2 = -2\Inner{\lambda_2,\lambda_3}.
    \end{equation}
    $G$ is generated by $\Lambda$ and $(R_3,\frac13\lambda_1)$ with $R_3 \in \SO(\Lambda)$ as in \autoref{Eq_R}.
  \item[($C_4$)]
    \label{Rmk_Bieberbach_C4}
    $\Lambda = \Inner{\lambda_1,\lambda_3,\lambda_3}$ with
    \begin{equation*}
      \Inner{\lambda_1,\lambda_2} = \Inner{\lambda_1,\lambda_3} = 0, \quad
      \abs{\lambda_2}^2 = \abs{\lambda_3}^2, \qandq
      \Inner{\lambda_2,\lambda_3} = 0.
    \end{equation*}
    $G$ is generated by $\Lambda$ and $(R_4,\frac14\lambda_1)$ with $R_4 \in \SO(\Lambda)$ as in \autoref{Eq_R}.
  \item[($C_6$)]
    \label{Rmk_Bieberbach_C6}
    $\Lambda = \Inner{\lambda_1,\lambda_3,\lambda_3}$ with \autoref{Eq_Bieberbach_C3}.
    $G$ is generated by $\Lambda$ and $(R_6,\frac16\lambda_1)$ with $R_6 \in \SO(\Lambda)$ as in \autoref{Eq_R}.
  \item[($C_2^2$)]
    \label{Rmk_Bieberbach_C2C2}
    $\Lambda = \Inner{\lambda_1,\lambda_2,\lambda_3}$ with
    \begin{equation*}
      \Inner{\lambda_1,\lambda_2} = \Inner{\lambda_2,\lambda_3} = \Inner{\lambda_3,\lambda_1} = 0.
    \end{equation*}
    $G$ is generated by $\Lambda$, $(R_+,\frac12(\lambda_1+\lambda_2))$, and $(R_-,\frac12(\lambda_2+\lambda_3))$ with $R_\pm \in \SO(\Lambda)$ as in \autoref{Eq_R}.
  \end{enumerate}
  Here $R_2,R_3,R_4,R_6,R_\pm \in \GL_3(\Z)$ are defined by
  \begin{equation}
    \label{Eq_R}
    \begin{gathered}
      R_2 \coloneq
      \begin{pmatrix}
        1 & 0 & 0 \\
        0 & -1 & 0\\
        0 & 0 & -1        
      \end{pmatrix}, \quad
      R_3
      \coloneq
      \begin{pmatrix}
        1 & 0 & 0\\
        0 & -1 & 1 \\
        0 & -1 & 0        
      \end{pmatrix}, \quad
      R_4
      \coloneq
      \begin{pmatrix}
        1 & 0 & 0\\
        0 & 0 & 1 \\
        0 & -1 & 0        
      \end{pmatrix}, \\
      R_6
      \coloneq
      \begin{pmatrix}
        1 & 0 & 0\\
        0 & 1 & -1 \\
        0 & 1 & 0        
      \end{pmatrix}, \qandq
      R_\pm
      \coloneq
      \begin{pmatrix}
        \pm 1 & 0 & 0 \\
        0 & \mp 1 & 0 \\
        0 & 0 & -1
      \end{pmatrix}.
    \end{gathered}
  \end{equation}
  $\GL_3(\Z)$ is identified with $\GL(\Lambda)$ by the choice of generators of $\Lambda$.
\end{remark}

The generalised Kummer construction involves a choice of the following data.

\begin{definition}
  \label{Def_ResolutionData}
  Let $(Y_0,\phi_0)$ be a flat $\Gtwo$--orbifold.
  Denote the connected components of the singular set of $Y_0$ by $S_\alpha$ ($\alpha \in A$).
  \defined{Resolution data}
  $\fR = (\Gamma_\alpha,G_\alpha,\rho_\alpha;R_\alpha,\jmath_\alpha;\hat X_\alpha,\hat\bomega_\alpha,\hat\rho_\alpha,\tau_\alpha)_{\alpha \in A}$
  for $(Y_0,\phi_0)$ consist of the following for every $\alpha \in A$:
  \begin{enumerate}
  \item
    \label{Def_ResolutionData_AlgebraicData}
    A finite subgroup $\Gamma_\alpha < \Sp(1)$, a Bieberbach group $G_\alpha < \SO(\Im\H)\ltimes \Im\H$, and a homomorphism $\rho_\alpha \co G_\alpha \to N_{\SO(\H)}(\Gamma_\alpha) \incl \Isom(\H/\Gamma_\alpha)$ as in \autoref{Ex_Model}~\autoref{Ex_Model_/G} with $X \coloneq \H/\Gamma_\alpha$ and its canonical hyperkähler form $\bomega$.
  \item
    \label{Def_ResolutionData_Model}        
    A number $R_\alpha >0$ defining the open set
    \begin{equation*}
      U_\alpha \coloneq \left.\paren[\big]{\Im \H \times \paren{B_{2R_\alpha}(0)/\Gamma_\alpha}}\right/G_\alpha \subset Y_\alpha
    \end{equation*}
    and an open embedding $\jmath_\alpha \co U_\alpha \to Y_0$ 
    satisfying $S_\alpha \subset \im \jmath_\alpha$ and
    \begin{equation*}
      \jmath_\alpha^*\phi_0 = \phi_{\alpha}
    \end{equation*}
    with $(Y_\alpha,\phi_\alpha)$ denoting the model space associated with $\H/\Gamma_\alpha$, $\bomega$, $G_\alpha$, and $\rho_\alpha$.
  \item
    \label{Def_ResolutionData_Resolution}
    A hyperkähler $4$--manifold $\hat X_\alpha$ with hyperkähler form $\hat\bomega_\alpha \in (\Im\H)^* \otimes \Omega^2(\hat X_\alpha)$,
    a homomorphism $\hat \rho_\alpha \co G_\alpha \to \Diff(\hat X_\alpha)$ with respect to which $\hat\bomega_\alpha$ is $G_\alpha$--invariant (in the sense of \autoref{Ex_Model}~\autoref{Ex_Model_/G}),
    a compact subset $K_\alpha \subset \hat X_\alpha$, and
    a $G_\alpha$--equivariant open embedding $\tau_\alpha \co \hat X_\alpha \setminus K_\alpha \to \H/\Gamma_\alpha$ with
    $\paren{\H\setminus B_{R_\alpha}(0)}/\Gamma \subset \im\tau_\alpha$ and    
    \begin{equation}
      \label{Def_ResolutionData_Resolution_Asymptotic}
      \abs{\nabla^k\paren{\tau_*\hat\bomega_\alpha-\bomega}} = O\paren{r^{-4-k}}
    \end{equation}
    for every $k \in \N_0$.
    \qedhere
  \end{enumerate}
\end{definition}

\begin{remark}[ADE classification of finite subgroups of $\Sp(1)$]
  \label{Rmk_FiniteSubgroupsSp1_ADE}
  \citet{Klein1884} classified the (non-trivial) finite subgroups $\Gamma < \Sp(1)$.
  They obey an ADE classification.
  $\Gamma$ is isomorphic to either:
  \begin{enumerate}
  \item[($A_k$)] 
    a cyclic group $C_{k+1}$,    
  \item[$(D_k)$]
    a dicyclic group $\Dic_{k-2}$,
  \item[($E_6$)]
    the binary tetrahedral group $2T$,
  \item[($E_7$)]
    the binary octahedral group $2O$,
    or
  \item[($E_8$)]
    the binary icosahedral group $2I$.
    \qedhere
  \end{enumerate}
\end{remark}

\begin{remark}
  Whether or not the data in \autoref{Def_ResolutionData}~\autoref{Def_ResolutionData_AlgebraicData} and \autoref{Def_ResolutionData_Model} exists is a property of a neighborhood of the singular set of $Y_0$.
  If it does exist, then it is essentially unique.
  The data in \autoref{Def_ResolutionData}~\autoref{Def_ResolutionData_Resolution} involves a choice.
\end{remark}

\begin{remark}
  There are many examples of closed flat $\Gtwo$--orbifolds admitting resolution data in the above sense; see \cites[§3]{Joyce1996a}[§12]{Joyce2000}[§3]{Barrett2006}[§5.3.4 and §5.3.5]{Reidegeld2017}.
  They arise from certain crystallographic groups $G < \Gtwo \ltimes \R^7$.
  It would be interesting to classify these (possibly computer-aided) to grasp the full scope of Joyce's generalised Kummer construction.
  Partial results have been obtained by \citet[§3.2]{Barrett2006}, and
  \citet[Theorem 5.3.1]{Reidegeld2017} observed that in \autoref{Def_ResolutionData}~\autoref{Def_ResolutionData_AlgebraicData} precisely $C_2$, $C_3$, $C_4$, $C_6$, $\Dic_2$, $\Dic_3$, and $2T$ can appear.
\end{remark}

\begin{remark}[scaling resolution data]
  For every $(t_\alpha) \in (0,1]^A$ the data $\hat\bomega_\alpha$ and $\tau_\alpha$ in \autoref{Def_ResolutionData}~\autoref{Def_ResolutionData_Resolution} can be replaced with $t_\alpha^2\hat\bomega_\alpha$ and $t_\alpha\tau_\alpha$.
\end{remark}

The following two remarks help to find resolution data $\fR$ with certain properties.

\begin{remark}[Gibbons--Hawking construction of $A_k$ ALE spaces]
  \label{Rmk_AkALE}
  Let $k \in \N$.
  Consider the subgroup $C_k \incl \Sp(1)$ generated by right multiplication with $e^{2\pi i/k}$.
  (Of course, $i$ can be replaced by $\hat\xi \in S^2 \subset \Im\H$ throughout.)
  The $A_k$ ALE hyperkähler $4$--manifolds used to resolve $\H/C_k$ can be understood concretely using the Gibbons--Hawking construction \cites{Gibbons1979}[§3.5]{Gibbons1997}.
  \begin{enumerate}
  \item
    \label{Rmk_AkALE_Construction}
    Let
    \begin{equation*}
      \bzeta
      \in \Delta
      \coloneq \Sym_0^k (\Im\H)
      \coloneq \set[\big]{
        [\zeta_1,\ldots,\zeta_k] \in (\Im\H^k)/S_k
        :
        \zeta_1+\cdots+\zeta_k = 0        
      }.
    \end{equation*}    
    Set $Z \coloneq \set{\zeta_1,\ldots,\zeta_k}$ and $B \coloneq \Im\H \setminus Z$.
    The function $V_\bzeta \in C^\infty(B)$ defined by
    \begin{equation*}
      V_\bzeta(q) \coloneq\sum_{a=1}^k \frac{1}{2\abs{q-\zeta_a}}
    \end{equation*}
    is harmonic and
    \begin{equation*}
      [*\rd V_\bzeta] \in \im\paren*{\rH^2(B,2\pi \Z) \to \rH_{\dR}^2(B)}.
    \end{equation*}
    Therefore, there is a $\U(1)$--principal bundle $p_\bzeta \co X_\bzeta^\circ \to B$ and a connection $1$--form $i\theta_\bzeta \in \Omega^1(X_\bzeta^\circ,i\R)$ with
    \begin{equation}
      \label{Eq_GibbonsHawking}
      \rd\theta_\bzeta = -p_\bzeta^*(*\rd V_\bzeta).
    \end{equation}
    Indeed, $p_\zeta$ is determined by $V_\bzeta$ up to isomorphism.
    The Euclidean inner product on $\Im\H$ defines
    \begin{equation*}
      \sigma \in (\Im\H)^* \otimes \Omega^1(\Im\H).
    \end{equation*}
    $X_\bzeta^\circ$ is an incomplete hyperkähler manifold with hyperkähler form $\bomega_\zeta$ defined by
    \begin{equation*}
      \bomega_\zeta \coloneq \theta_\bzeta \wedge p_\bzeta^*\sigma + p_\bzeta^*(V_\bzeta\cdot *\sigma).
    \end{equation*}
  \item
    \label{Rmk_AkALE_Decay}
    The map $p_{\bm{0}} \co (\H\setminus\set{0})/\Gamma \to B$ defined by
    \begin{equation*}
      p_{\bm{0}}([x]) \coloneq \frac{xix^*}{2k}
    \end{equation*}
    is a $\U(1)$--principal bundle with  $[x]\cdot e^{i\alpha} \coloneq [x e^{i\alpha/k}]$.
    The connection $1$--form $i\theta_{\bm{0}}$ defined by
    \begin{equation*}
      \theta_{\bm{0}}([x,v]) \coloneq \frac{\Inner{xi,v}}{k\abs{x}^2}
    \end{equation*}
    satisfies \autoref{Eq_GibbonsHawking}.
    Therefore, $X_{\bm{0}}^\circ = (\H\setminus\set{0})/\Gamma$.
    A straightforward (but slightly tedious) computation reveals that $\bomega_{\bm{0}}$ agrees with the standard hyperkähler form on $(\H\setminus\set{0})/\Gamma$.
    As a consequence,
    $X_\bzeta^\circ$ can be extended to a complete hyperkähler orbifold $X_\bzeta$ by adding $\#Z$ points.
    If
    \begin{equation*}
      \bzeta \in \Delta^\circ
      \coloneq
      \set[\big]{ [\zeta_1,\ldots,\zeta_k] \in \Delta : \zeta_1,\ldots,\zeta_k ~\textnormal{are pairwise distinct} },
    \end{equation*}
    then $X_\bzeta$ is a manifold.
    Since
    \begin{equation}
      \abs{\nabla^k (V_\bzeta - V_{\bm{0}})\circ p_{\bm{0}}} = O\paren{\abs{x}^{-4-k}}
    \end{equation}
    for every $k \in \N_0$,
    the asymptotic decay condition \autoref{Def_ResolutionData_Resolution_Asymptotic} holds.
  \item
    \label{Rmk_AkALE_HolomorphicCurves}
    Let $\bzeta \in \Delta^\circ$.
    If
    \begin{equation*}
      \ell = \set[\big]{ \hat\xi t + \eta: t \in [a,b] } \subset \Im\H
    \end{equation*}
    for some $\eta \in \Im\H$, $[a,b] \subset \R$, and $\hat\xi \in S^2 \subset \Im\H$ is a segment satisfying $\del \ell \subset Z$ and $\ell^\circ \subset B$,
    then
    \begin{equation*}
      \Sigma_\ell \coloneq p_\bzeta^{-1}(\ell) \subset X_\bzeta
    \end{equation*}
    is $I_{\bzeta,\hat\xi}$--holomorphic with
    \begin{equation*}
      I_{\bzeta,\hat\xi} \coloneq \Inner{\bI_\zeta,\hat\xi}
    \end{equation*}
    and $\Sigma_\ell \iso S^2$.
    $\rH_2(X_\bzeta,\Z)$ is generated by the homology classes of these curves.
    In fact, $X_\bzeta$ retracts to a tree of these curves.
  \item
    \label{Rmk_AkALE_Symmetry}
    Define $\Lambda^+ \co \SO(\H) \to \SO(\Im\H)$ by
    \begin{equation*}
      \Lambda^+R\paren{\rd q \wedge \rd \bar q}
      \coloneq
      R_*\paren{\rd q \wedge \rd \bar q}.
    \end{equation*}
    Let $\bzeta \in \Delta^\circ$.
    If $R \in N_{\SO(\H)}(\Gamma)$ satisfies $\Lambda^+R(\bzeta) = \bzeta$,
    then it lifts to an isometry $\hat R \in \Diff(X_\bzeta)$ satisfying 
    \begin{equation*}
      \paren[\big]{(\Lambda^+R)^* \otimes \hat R^*}\bomega_\bzeta = \bomega_\bzeta
      \qandq
      \hat R(\Sigma_\ell) = \Sigma_\ell
    \end{equation*}
    for every $\ell$ as in \autoref{Rmk_AkALE_HolomorphicCurves}.
   \qedhere
  \end{enumerate}
\end{remark}

\begin{remark}[Kronheimer's construction of ALE spaces]
  \label{Rmk_Kronheimer_ALE}
  Let $\Gamma < \Sp(1)$ be a finite subgroup---not necessarily cyclic.
  The ALE hyperkähler $4$--manifolds asymptotic to $\H/\Gamma$ can be understood using the work of 
  \citet{Kronheimer1989,Kronheimer1989a}.
  This is rather more involved that \autoref{Rmk_AkALE} and summarised in the following.
  (This is only used for \autoref{Ex_ExistenceOfAssociatives_Dic2} and might be skipped at the reader's discretion.)
  \begin{enumerate}
  \item
    \label{Rmk_Kronheimer_ALE_Construction}
    Denote by $R \coloneq \C[\Gamma] = \Map(\Gamma,\C)$ the regular representation of $\Gamma$ equipped with the standard $\Gamma$--invariant Hermitian inner product.
    Set
    \begin{equation*}
      S \coloneq \paren{\H \otimes_\R \fu(R)}^\Gamma
      \qandq
      G \coloneq \P U(R)^\Gamma. 
    \end{equation*}
    The adjoint action of $G$ on $S$ has a distinguished hyperkähler moment map
    \begin{equation*}
      \mu \co S \to (\Im\H)^* \otimes \fg^*.
    \end{equation*}
    Denote by $\fz^* \subset \fg^*$ the annihilator of $[\fg,\fg]$.
    For every
    \begin{equation*}
      \bzeta \in \tilde\Delta \coloneq (\Im\H)^* \otimes \fz^*
    \end{equation*}
    the hyperkähler quotient
    \begin{equation*}
      X_\bzeta \coloneq S \hkred\!_{\bzeta} G \coloneq \mu^{-1}(\bzeta)/G
    \end{equation*}
    is an ALE hyperkähler $4$--orbifold asymptotic to $\H/\Gamma$.
  \item
    \label{Rmk_Kronheimer_ALE_Smooth}
    Set
    \begin{equation*}
      \Pi \coloneq \set[\big]{ i\pi \in \fu(R)^\Gamma\setminus\set{0,\one} : \pi^2 = \pi }.
    \end{equation*}
    (This set is in bijection with the set of non-trivial proper subrepresentations of $R$.)
    For $i\pi \in \Pi$ denote by $D_{i\pi} \coloneq [i\pi]^0 \subset \fz^*$ the annihilator of $[i\pi] \in \fg$.
    If
    \begin{equation*}
      \bzeta \in \tilde\Delta^\circ \coloneq \tilde\Delta \setminus D \qwithq
      D \coloneq \bigcup_{i\pi \in \Pi} (\Im \H)^* \otimes D_{i\pi},
    \end{equation*}
    then $X_\bzeta$ is a manifold.
  \item
    \label{Rmk_Kronheimer_ALE_McKay}    
    \autoref{Rmk_FiniteSubgroupsSp1_ADE} associates a Dynkin diagram with $\Gamma$.
    According to the McKay correspondence \cite{McKay1980},    
    the non-trivial irreducible representations of $\Gamma$ correspond to the vertices of this diagram.
    The corresponding root system $\Phi$ has a preferred set of positive roots $\Phi^+$.
    The latter can be identified with $\Pi$.
    In particular, the hyperplanes $D_{i\pi}$ correspond to the walls of the Weyl chambers of $\Phi$.
  \item
    \label{Rmk_Kronheimer_ALE_HolomorphicCurves}
    Let $\bzeta \in \tilde\Delta^\circ$.
    Let $\alpha \in \Phi$ be a simple root.
    Define $\xi \in \Im\H$ by $\Inner{\xi,\cdot} \coloneq \bzeta(\alpha) $ and set $\hat \xi \coloneq \xi/\abs{\xi}$.
    There is a $I_{\bzeta,\hat\xi}$--holomorphic curve
    \begin{equation*}
      \Sigma_\alpha \subset X_\bzeta
    \end{equation*}
    with $\Sigma_\alpha \iso S^2$.
    $\rH_2(X_\bzeta)$ is generated by the homology classes of these curves.
    In fact, $X_\bzeta$ retracts to a tree of these curves.
    This identifies $\rH_2(X_\bzeta)$ with the root lattice $\Z\Phi$.
  \item
    \label{Rmk_Kronheimer_ALE_Symmetry}
    Let $\bzeta \in \tilde\Delta^\circ$.
    If $R \in N_{\SO(\H)}(\Gamma)$ satisfies $\Lambda^+R(\bzeta) = \bzeta$,
    then it lifts to an isometry $\hat R \in \Diff(X_\bzeta)$ satisfying 
    \begin{equation*}
      \paren{R^* \otimes \hat R^*}\bomega_\bzeta = \bomega_\bzeta \qandq
      \hat R(\Sigma_\alpha) = \Sigma_\alpha.
    \end{equation*}
    Denote by $W$ the Weyl group of $\Phi$.
    Every $\sigma \in W$ induces a hyperkähler isometry $\hat \sigma \co X_\bzeta \iso X_{\sigma(\bzeta)}$ satisfying $\hat \sigma(\Sigma_\alpha) = \Sigma_{\sigma(\alpha)}$.
    In particular, $\tilde\Delta$ and $\tilde\Delta^\circ$ can be replaced with
    \begin{equation*}
      \Delta \coloneq \tilde\Delta/W \qandq
      \Delta^\circ \coloneq \tilde\Delta^\circ/W.      
    \end{equation*}
  \end{enumerate}
  Of course, for $\Gamma = C_k$ the above parallels \autoref{Rmk_AkALE}.
\end{remark}

The generalised Kummer construction proceeds by constructing an approximate resolution and correcting it via singular perturbation theory.

\begin{definition}[approximate resolution]
  \label{Def_Approximate_Resolution}
  Let $(Y_0,\phi_0)$ be a flat $\Gtwo$--orbifold together with resolution data $\fR$.
  Let $t \in (0,1]$.
  Set
  \begin{equation*}
    Y_0^\circ \coloneq Y_0 \setminus \bigcup_{\alpha \in A} \jmath_\alpha\paren[\Big]{\paren[\big]{\Im \H \times \paren{\overline B_{R_\alpha}(0)/\Gamma_\alpha}}\big{/}G_\alpha}.
  \end{equation*}
  For $\alpha \in A$ denote by $(\hat Y_{\alpha,t},\hat\phi_{\alpha,t})$ the model space associated with $\hat X_\alpha$, $t^2\hat\bomega_\alpha$, $G_\alpha$, and $\hat\rho_\alpha$.
  Set
  \begin{align*}
    \hat Y_t^\circ
    &\coloneq
      \coprod_{\alpha \in A}
      \hat Y_{\alpha,t}^\circ
      \qwithq
      \hat Y_{\alpha,t}^\circ
      \coloneq
      \left.\paren[\Big]{\Im \H \times \paren[\big]{K_\alpha \cup (t\tau_\alpha)^{-1}\paren{B_{2R_\alpha}(0)/\Gamma_\alpha}}}\right/G_\alpha, \\
    \hat V_t
    &\coloneq
      \coprod_{\alpha \in A}
      \hat V_{\alpha,t}
      \qwithq
      \hat V_{\alpha,t}
      \coloneq
      \left.\paren[\Big]{\Im \H \times (t\tau_\alpha)^{-1}\paren[\big]{\paren{B_{2R_\alpha}(0)\setminus B_{R_\alpha}(0)}/\Gamma_\alpha}}\right/G_\alpha, \qand \\
    V
    &\coloneq
      \coprod_{\alpha \in A}
      V_{\alpha\phantom{,t}}
      \qwithq
      V_{\alpha\phantom{,t}}
      \coloneq
      \left.\paren[\Big]{\Im \H \times \paren[\big]{\paren{B_{2R_\alpha}(0)\setminus B_{R_\alpha}(0)}/\Gamma_\alpha}}\right/G_\alpha.
  \end{align*}
  Denote by $f \co \hat V_t \to V$ the diffeomorphism induced by $\jmath_\alpha$ and $t\tau_\alpha$ ($\alpha \in A$).
  Denote by $Y_t$ the $7$--manifold obtained by gluing $\hat Y_t^\circ$ and $Y_0^\circ$ along $f$:
  \begin{equation*}
    Y_t \coloneq \hat Y_t^\circ \cup_f Y_0^\circ.
  \end{equation*}
  A cut-and-paste procedure (whose details are swept under the rug \cites[Proof of Theorem 2.2.1]{Joyce1996a}[§11.5.3]{Joyce2000}) produces a closed $3$--form
  \begin{equation*}
    \tilde\phi_t \in \Omega^3(Y_t)
  \end{equation*}
  which agrees with $\hat \phi_{\alpha,t}$ on $\hat Y_\alpha^\circ \setminus \hat V_{\alpha,t}$ ($\alpha \in A$) and with $\phi_0$ on $Y_0^\circ \setminus V$;
  moreover:
  if $t$ is sufficiently small,
  then $\tilde\phi_t$ defines a $\Gtwo$--structure on $Y_t$.
\end{definition}

\begin{remark}
  \label{Rmk_HomologyMaps}
  Since $\hat X_\alpha$ retracts to a compact subset,
  there are canonical maps
  \begin{equation*}
    \eta_\alpha \co H_\bullet(\hat Y_{\alpha,t},\Z) \iso H_\bullet(\hat Y_{\alpha,t}^\circ,\Z) \to H_\bullet(Y_t,\Z).
    \qedhere
  \end{equation*}
\end{remark}

\begin{remark}
  \label{Rmk_T-3Phi}
  As $t$ tends to zero, the Riemannian metric $\tilde g_t$ associated with $\tilde\phi_t$ degenerates quite severely:
  $\Abs{R_{\tilde g_t}}_{L^\infty} \sim t^{-2}$ and $\inj(\tilde g_t) \sim t^{-1}$.
  To ameliorate this it can be convenient to pass to the Riemannian metric $t^{-2}\tilde g_t$ associated with $t^{-3}\tilde\phi_t$.
\end{remark}

The following refinement of Joyce's existence theorem for torsion-free $\Gtwo$--structures 
\cites[Theorem B]{Joyce1996}[Theorems G1 and G2]{Joyce2000} is crucial.

\begin{theorem}[{\citet[Corollary 4.31]{Platt2020}}]
  \label{Thm_PerturbG2Structure}
  Let $\fR$ be resolution data for a closed flat $\Gtwo$--orbifold $(Y_0,\phi_0)$.
  Let $\alpha \in (0,1/16)$.
  There are $T_0=T_0(\fR),c=c(\fR,\alpha) > 0$ and for every $t \in (0,T_0)$ there is a torsion-free $\Gtwo$--structure $\phi_t \in \Omega^3(Y_t)$ with $[\phi_t] = [\tilde\phi_t] \in \rH_{\dR}^3(Y_t)$ satisfying
  \begin{equation*}
    \Abs{t^{-3}(\phi_t - \tilde \phi_t)}_{C^{1,\alpha}} \leq c t^{3/2-\alpha}.
  \end{equation*}  
  Here $\Abs{-}_{C^{1,\alpha}}$ is with respect to
  $t^{-2}\tilde g_t$.
\end{theorem}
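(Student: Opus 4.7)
\medskip

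\noindent\textbf{Proof proposal.}
The plan is to mimic Joyce's singular perturbation argument for torsion-free $\Gtwo$--structures, adapted to the scale of the gluing parameter $t$ via the rescaled metric $t^{-2}\tilde g_t$. Since we require $[\phi_t] = [\tilde\phi_t]$, the natural ansatz is
\begin{equation*}
  \phi_t = \tilde\phi_t + \rd\eta_t
\end{equation*}
for some $\eta_t \in \Omega^2(Y_t)$; this automatically makes $\phi_t$ closed. The condition that $\phi_t$ be torsion-free is then equivalent to $\rd *_{\phi_t}\phi_t = 0$, which, expanding the non-linear map $\phi \mapsto *_\phi\phi$ around $\tilde\phi_t$, becomes a quasi-linear elliptic PDE of the schematic form
\begin{equation*}
  \rd \ast_{\tilde\phi_t}\!\rd \eta_t + \rd F(\rd \eta_t) = -\rd \ast_{\tilde\phi_t}\!\tilde\phi_t,
\end{equation*}
where $F$ collects the quadratic and higher-order terms in $\rd\eta_t$. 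Additionally, to fix a gauge one imposes $\rd^{*_{\tilde\phi_t}}\eta_t = 0$, turning the principal part into a Laplace-type operator on $2$--forms.

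The first step is to control the right-hand side: using the fact that $\tilde\phi_t$ agrees with the genuine torsion-free structures $\hat\phi_{\alpha,t}$ and $\phi_0$ away from the gluing necks $\hat V_{\alpha,t}$, the torsion $\rd \ast_{\tilde\phi_t}\tilde\phi_t$ is supported in the necks and is controlled by the asymptotic decay estimate \autoref{Def_ResolutionData_Resolution_Asymptotic}, which guarantees that $\tau_*\hat\bomega_\alpha - \bomega = O(r^{-4})$. In the rescaled metric $t^{-2}\tilde g_t$ this produces an initial error bounded by $c\, t^{7/2-\alpha}$ or better in the relevant $C^{0,\alpha}$ norm — enough room to absorb the $t^{3/2-\alpha}$ budget of the theorem.

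The second, and principal, step is to obtain a uniform (in $t$) right inverse for the linearised operator on the weighted $C^{k,\alpha}$ space of coexact $2$--forms on $(Y_t, t^{-2}\tilde g_t)$. This is the main obstacle: as $t \to 0$ the geometry of $Y_t$ develops long necks modeled on $\Im\H \times (\hat X_\alpha, t^2\hat\bomega_\alpha)$, and the Laplacian acquires small eigenvalues associated with the harmonic $2$--forms on $\hat X_\alpha$ and on $Y_0$. One handles this by a patching argument: construct an approximate inverse by separately inverting the Laplacian on the building blocks $(\hat Y_{\alpha,t}, \hat\phi_{\alpha,t})$ and $(Y_0, \phi_0)$ in suitable weighted spaces (using Fredholm theory for the model $\Im\H \times \hat X_\alpha$ at infinity), then glue via partitions of unity and correct the resulting error by Neumann iteration. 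The $t$--dependence of the operator norm is tracked carefully; this is precisely the content of \cite{Platt2020}, which sharpens Joyce's earlier estimates to give the explicit $C^{1,\alpha}$ rate.

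With these two ingredients in place, the theorem follows from a standard Banach fixed-point / Newton iteration: writing the PDE as $\eta_t = -G \bigl(\rd\ast_{\tilde\phi_t}\tilde\phi_t + \rd F(\rd\eta_t)\bigr)$ for the right inverse $G$, the linear bound combined with a quadratic estimate on $F$ makes the right-hand side a contraction on a small ball in the weighted $C^{1,\alpha}$ space of radius $c\, t^{3/2-\alpha}$, provided $t$ is sufficiently small. The unique fixed point yields $\eta_t$, and setting $\phi_t = \tilde\phi_t + \rd\eta_t$ gives the required torsion-free $\Gtwo$--structure with the stated cohomological and metric estimates.
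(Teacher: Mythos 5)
The paper does not actually prove this statement: it is imported verbatim from \citet[Corollary 4.31]{Platt2020} (a quantitative refinement of Joyce's existence theorem for torsion-free $\Gtwo$--structures), so there is no internal proof to compare against. Judged on its own terms, your outline correctly identifies the strategy that the cited reference follows---the exact ansatz $\phi_t = \tilde\phi_t + \rd\eta_t$ (which enforces $[\phi_t]=[\tilde\phi_t]$ and closedness for free), an initial-error estimate fed by the decay \autoref{Def_ResolutionData_Resolution_Asymptotic}, a right inverse for the linearisation with controlled $t$--dependence, and a contraction-mapping argument.

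As a proof, however, it has a genuine gap, and you in effect concede it yourself. The two quantitative inputs that make the theorem true \emph{with the stated rate} are asserted rather than established: the claim that the initial torsion is $O(t^{7/2-\alpha})$ in the $C^{0,\alpha}$ norm of $t^{-2}\tilde g_t$ is not derived (one must track how the $O(r^{-4})$ discrepancy of the hyperkähler forms, the cut-off in the neck $B_{2R_\alpha}\setminus B_{R_\alpha}$, and the two rescalings by $t$ interact), and the uniform right inverse on $(Y_t,t^{-2}\tilde g_t)$---whose operator norm must blow up no faster than a specific power of $t$ for the budget to close---is exactly the hard analytic content, which your text explicitly defers to \cite{Platt2020}, i.e.\ to the result being proven. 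Without pinning down both exponents, the conclusion $\Abs{t^{-3}(\phi_t-\tilde\phi_t)}_{C^{1,\alpha}}\leq ct^{3/2-\alpha}$ does not follow; what the argument as written delivers is only the existence of \emph{some} torsion-free $\phi_t$ in the correct cohomology class for small $t$, which is Joyce's original theorem rather than the refinement needed here (and needed later, e.g.\ to verify hypothesis \autoref{Prop_PerturbationMorseBottAssociatives_Error} of \autoref{Prop_PerturbationMorseBottAssociatives} with $\beta=5/2$). A smaller point: the small-eigenvalue issue for the Laplacian on $2$--forms is partly defused by the $\rd\eta_t$ ansatz itself, since one only needs to invert on (co)exact forms; the real difficulty is the $t$--dependence of the Schauder constants across the neck regions, not the low-lying spectrum per se.
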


\begin{remark}[$K$--equivariant generalised Kummer construction]
  \label{Rmk_GeneralisedKummer_KInvariant}
  Let $(Y_0,\phi_0)$ be a closed flat $\Gtwo$--orbifold.
  Let $K$ be a group.
  Let $\lambda \co K \to \Diff(Y_0)$ be a homomorphism with respect to which $\phi_0$ is $K$--invariant.
  $K$ acts on the singular set of $Y_0$ and, therefore, on $A$.
  \defined{$K$--equivariant resolution data} for $(Y_0,\phi_0;\lambda)$ consist of resolution data $\fR = (\Gamma_\alpha,G_\alpha,\rho_\alpha;R_\alpha,\jmath_\alpha;\hat X_\alpha,\hat\bomega_\alpha,\hat\rho_\alpha,\tau_\alpha)_{\alpha\in A}$ for $(Y_0,\phi_0)$ with the property that for every $\alpha \in A$ and $g \in K$
  \begin{equation*}
    \Gamma_{g\alpha} = \Gamma_\alpha, \quad
    G_{g\alpha} = G_\alpha, \quad
    \rho_{g\alpha} = \rho_\alpha, \qandq
    R_{g\alpha} = R_\alpha
  \end{equation*}  
  and of the following additional data for every $\alpha \in A$:
  \begin{enumerate}
  \item
    A pair of homomorphisms $\lambda_\alpha \co K \to N_{\SO(\Im\H)\ltimes \Im\H}(G_\alpha) < \SO(\Im\H)\ltimes \Im \H$ and
    $\kappa_\alpha \co K \to N_{N_{\SO(\H)}(G_\alpha)}(\rho_\alpha(G_\alpha)) \incl \Isom(\H/\Gamma_\alpha)$
    such that for every $g \in G$
    \begin{equation*}
      \lambda(g) \circ \jmath_\alpha = \jmath_{g\alpha} \circ \sqparen{\lambda_\alpha(g) \times \kappa_\alpha(g)}.
    \end{equation*}
    Here $\sqparen{\lambda_\alpha(g) \times \kappa_\alpha(g)}$ denotes the induced isometry of $U_\alpha = U_{g\alpha}$.
  \item
    A homomorphism $\hat\kappa_\alpha \co K \to N_{\Diff(\hat X_\alpha)}(\hat\rho_\alpha(G_\alpha))$
    such that $\hat\bomega_\alpha$ is $K$--invariant with respect to $\lambda_\alpha$ and $\hat\kappa_\alpha$ (in the sense of \autoref{Ex_Model}~\autoref{Ex_Model_/G}) and $\tau_\alpha$ is $K$--equivariant with respect to $\hat\kappa_\alpha$.
  \end{enumerate}  
  The approximate resolution in can \autoref{Def_Approximate_Resolution} be done so that $\lambda$ and $(\lambda_\alpha,\kappa_\alpha)_{\alpha \in A}$ lift to a homomorphism $\lambda_t \co K \to \Diff(Y_t)$ with resepect to which $\tilde\phi_t$ is $K$--invariant.
  In this situation,
  $\tilde\phi_t$ construted by \autoref{Thm_PerturbG2Structure} is $K$--invariant.
\end{remark}


\section{Perturbing Morse--Bott families of associative submanifolds}
\label{Sec_PerturbingMorseBottFamilies}

Throughout,
let $Y$ be a $7$--manifold with a $\Gtwo$--structure $\phi \in \Omega^3(Y)$.
Set
\begin{equation*}
  \psi \coloneq *\phi \in \Omega^4(Y).
\end{equation*}
Encode the torsion of $\phi$ as the section $\tau \in \Gamma\paren{\gl(TY)}$ defined by
\begin{equation*}
  \nabla_v \psi \eqcolon \tau(v)^\flat \wedge \phi.
\end{equation*}
Here $-^\flat \co TY \to T^*Y$ denotes the isomorphism induced by the Riemannian metric.

\begin{definition}
  A closed oriented $3$--dimensional immersed submanifold $P \imm Y$ is \defined{($\phi$--)associative} if
  \begin{equation*}
    \phi|_P > 0
    \qandq
    (i_v\psi)|_P = 0
    \qforeveryq
    v \in NP
  \end{equation*}
  or, equivalently, if it is $\phi$--semi-calibrated \cite[Theorem 1.6]{Harvey1982}.
\end{definition}

\begin{example}
  \label{Ex_Model_Associative}
  Assume the situation of \autoref{Ex_Model}~\autoref{Ex_Model_/G}.
  Let $\hat\xi \in S^2 \subset \Im\H$, $L > 0$, and $\Sigma \subset X$.
  Suppose that $\Sigma$ is a closed $I_{\hat\xi}$--holomorphic curve with $I_{\hat \xi} \coloneq \Inner{\bI,\hat\xi}$,
  $\xi \coloneq L\hat\xi \in \Lambda < G$ is primitive,
  $\Z\xi < G$ is normal, and,
  for every $g \in G$,
  $\rho(g)(\Sigma) = \Sigma$.
  In this situation,
  for every
  \begin{equation*}
    [\eta] \in
    M/H \qwithq
    M \coloneq \paren{\Im\H/\R\xi}/\paren{\Lambda/\Z\xi} \iso T^2
    \qandq
    H \coloneq G/\Lambda
  \end{equation*}
  the submanifold  
  \begin{equation*}
    P_{[\eta]} \coloneq \left.\paren[\big]{(\R \xi + \eta) \times \Sigma}\right/\Z\xi \imm Y
  \end{equation*}
  is associative and diffeomorphic to the mapping torus $T_\mu$ of $\mu = \rho(\xi) \in \Diff(\Sigma)$.
\end{example}

\begin{remark}
  \label{Rmk_Model_Associative_Possibilities}
  $\Z\xi < G$ is normal if and only if $\xi$ is an eigenvector of every $R \in G \cap \SO(\Im\H)$.
  Direct inspection of \autoref{Rmk_Bieberbach} reveals the following possibilities (without loss of generality):
  \begin{enumerate}
  \item[$(\one)$]
    $H \iso \one$ and $\xi \in \Lambda$ is any primitive element.
  \item[($C_2^+$)]
    $H \iso C_2$ and $\xi = \lambda_1$.
    The orbifold $M/H$ has $4$ singularities:
    each with isotropy $C_2$.
  \item[($C_2^-)$]
    $H \iso C_2$ and $\xi = \lambda_2$.
    $M/H$ is diffeomorphic to the Klein bottle $\RP^2 \# \RP^2$.
  \item[($C_3$)]
    $H \iso C_3$ and $\xi = \lambda_1$.
    The orbifold $M/H$ has $3$ singularities:
    each with isotropy $C_3$.
  \item[($C_4$)]
    $H \iso C_4$ and $\xi = \lambda_1$.
    The orbifold $M/H$ has $3$ singularities:
    two with isotropy $C_4$, one with isotropy $C_2$.
  \item[($C_6$)]
    $H \iso C_6$ and $\xi = \lambda_1$.
    The orbifold $M/H$ has $3$ singularities:
    one with isotropy $C_6$,
    one with isotropy $C_3$,
    one with isotropy $C_2$.
  \item[($C_2^2$)]
    $H \iso C_2^2$ and $\xi = \lambda_1$.
    The orbifold $M/H$ has $2$ singularities:
    each with isotropy $C_2$.
    \qedhere
  \end{enumerate}
\end{remark}

\begin{remark}
  \label{Rmk_Model_Associative_Mu=1}
  The examples discussed in \autoref{Sec_Examples} are based on \autoref{Ex_Model_Associative} with $\mu = \id_\Sigma$.
\end{remark}

Let $\beta \in \rH_3(Y,\Z)$.
Denote by $\sS = \sS(Y)$ the oribfold of closed oriented $3$--dimensional immersed submanifolds $P \imm Y$ with $\phi|_P > 0$ and $[P] = \beta$;
cf.~\cite[§44]{Kriegl1997}.
Define $\delta\Upsilon = \delta\Upsilon^\psi \in \Omega^1(\sS)$ by
\begin{equation*}
  \delta\Upsilon_P(v) \coloneq \int_P i_v\psi
  \qforq
  v \in T_P\sS = \Gamma(NP).
\end{equation*}
By construction, $P \in \sS$ is associative if and only if it is a critical point of $\delta\Upsilon$.

If $\rd\psi = 0$,
then $\delta\Upsilon$ is closed;
indeed:
there is a covering map $\pi \co \tilde\sS \to \sS$ such that $\pi^*\delta\Upsilon$ is exact.
The covering map $\pi$ is the principal covering map associated with the sweep-out homomorphism
\begin{equation*}
  \textnormal{sweep} \co \pi_1(\sS) \to \rH_4(Y).
\end{equation*}
More concretely:
choose $P_0 \in \sS$ and denote by $\tilde S$ the set of equivalence classes $[P,Q]$ of pairs consisting of $P \in \sS$ and a $4$--chain $Q$ satisfying $\del Q = P - P_0$ with respect to the equivalence relation $\sim$ defined by
\begin{equation*}
  (P_1,Q_1) \sim (P_2,Q_2)
  \iff
  \paren[\big]{P_1 = P_2 \qandq
    [Q_1-Q_2] = 0 \in \rH_4(Y,\Z)}.
\end{equation*}
$\tilde\sS$ admits a unique smooth structure such that the canonical projection map $\pi \co \tilde S \to S$ is a smooth covering map.
Evidently, $\Upsilon = \Upsilon^\psi \in C^\infty(\tilde\sS)$ defined by
\begin{equation*}
  \Upsilon([P,Q])
  \coloneq
  \int_Q \psi
\end{equation*}
satisfies
\begin{equation*}
  \rd\Upsilon = \pi^*(\delta\Upsilon).
\end{equation*}

If $\bP \co M \to \sS$ is a smooth map,
then critical points of $\bP^*(\delta\Upsilon)$ need not correspond to associative submanifolds.
However, the following trivial observation turns out to be helpful.

\begin{lemma}
  \label{Lem_Trivial}
  Let $\bP \co M \to \sS$ be a smooth map.
  If for every $x \in M$
  \begin{equation*}
    \ker (\delta\Upsilon)_{\bP(x)} + \im T_x\bP = T_{\bP(x)}\sS,
  \end{equation*}
  then $\bP(x)$ is associative if and only if $x$ is a zero of $\bP^*(\delta\Upsilon)$.
  \qed
\end{lemma}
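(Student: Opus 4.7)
The plan is to unwind the definitions and observe that the statement is essentially a linear-algebraic triviality about the one-form $(\delta\Upsilon)_{\bP(x)}$ on $T_{\bP(x)}\sS$. Recall that $P \in \sS$ is associative if and only if $(\delta\Upsilon)_P = 0$, so I just need to show that under the hypothesis, the vanishing of $\bP^*(\delta\Upsilon)$ at $x$ forces the vanishing of $(\delta\Upsilon)_{\bP(x)}$.

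One direction is immediate: if $(\delta\Upsilon)_{\bP(x)} = 0$, then pulling back under $T_x\bP$ yields $\bP^*(\delta\Upsilon)_x = 0$. For the converse, suppose $\bP^*(\delta\Upsilon)_x = 0$. Then $(\delta\Upsilon)_{\bP(x)}$ vanishes on $\im T_x\bP$. By its very definition (or by the definition of the kernel of a linear functional), $(\delta\Upsilon)_{\bP(x)}$ also vanishes on $\ker(\delta\Upsilon)_{\bP(x)}$. The hypothesis
\begin{equation*}
  \ker(\delta\Upsilon)_{\bP(x)} + \im T_x\bP = T_{\bP(x)}\sS
\end{equation*}
then shows that $(\delta\Upsilon)_{\bP(x)}$ vanishes on all of $T_{\bP(x)}\sS$, so $\bP(x)$ is associative.

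There is no real obstacle here; the only subtlety is keeping track of the fact that $\sS$ is an orbifold of infinite-dimensional character, but this is irrelevant since the argument takes place pointwise in the tangent spaces and uses only linearity of $(\delta\Upsilon)_{\bP(x)}$ and the chain rule $\bP^*(\delta\Upsilon)_x = (\delta\Upsilon)_{\bP(x)} \circ T_x\bP$. This is why the authors label the observation trivial and end the statement with \qed without giving a separate proof block.
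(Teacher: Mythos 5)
Your proposal is correct and is precisely the intended argument: the paper marks the lemma with a terminal \qed and supplies no proof, treating it as the evident linear-algebra observation that a linear functional vanishing on $\im T_x\bP$ and (tautologically) on its own kernel must vanish on their sum, hence everywhere under the stated hypothesis. Nothing further is needed.
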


\begin{remark}
  \label{Rmk_Trivial}
  \autoref{Lem_Trivial} is particularly useful if there is a mechanism that forces $\bP^*(\delta\Upsilon)$ to have zeros;
  e.g.:
  \begin{enumerate}
  \item
    \label{Rmk_Trivial_Euler}
    If $M$ is closed,
    then $\bP^*(\delta\Upsilon)$ has $\chi(M)$ zeros (counted with signs and multiplicities).
  \item
    \label{Rmk_Trivial_H}
    If there is a finite group $H$ acting on $M$ and
    $\bP^*(\delta\Upsilon)$ is $H$--invariant,
    then every isolated fixed-point is a zero.
  \item
    \label{Rmk_Trivial_Exact}
    If $M$ is closed and $\bP^*(\delta\Upsilon)$ is exact,
    then it has at least two zeros (indeed: at least three unless $M$ is homeomorphic to a sphere).
    $\bP^*(\delta\Upsilon)$ is exact if and only if the composite homomorphism
    \begin{equation*}
      \pi_1(M) \xrightarrow{\pi_1(\bP)} \pi_1(\sS) \xrightarrow{\textnormal{sweep}} \rH_4(Y) \xrightarrow{\Inner{-,[\psi]}} \R
    \end{equation*}
    vanishes.
    \qedhere
  \end{enumerate}
\end{remark}

The deformation theory of associative submanifolds is quite well-behaved.
Here is a summary of the salient points.

\begin{definition}
  A \defined{tubular neighborhood} of $P \in \sS$ is an open immersion $\jmath \co U \imm Y$ with $U \subset NP$ an open neighborhood of the zero section in $NP$ satisfying $[0,1]\cdot U = U$.
\end{definition}

Let $\jmath \co U \imm V$ be a tubular neighborhood of $P \in \sS$.
Define $\bQ = \bQ_\jmath \co \Gamma(U) \to \sS$ by
\begin{equation*}
  \bQ(v) \coloneq \jmath(\Gamma_v)
  \qwithq
  \Gamma_v \coloneq \im v \subset NP.
\end{equation*}
This map is (the inverse of) a chart of $\sS$.
Since $\Gamma(U) \subset \Gamma(NP)$ open,
$\Omega^1(\Gamma(U))$ can be identified with $C^\infty\paren{\Gamma(U),\Gamma(NP)^*}$.
Therefore, it makes sense to Taylor expand $\bQ^*(\delta\Upsilon)$.
If $P$ is associative,
then the first order term is independent of $\jmath$.

\begin{definition}
  \label{Def_Associative_Linearisation} 
  Let $P \in \sS$ be associative.
  Define $\gamma \co \Hom(TP,NP) \to NP$ by
  \begin{equation*}
    \Inner{\gamma(v\cdot u^\flat),w} \coloneq \phi(u,v,w).
  \end{equation*}
  Denote by $\tau^\perp \in \Gamma(\gl(NP))$ the restriction of $\tau \in \Gamma(\gl(TY))$.
  The \defined{Fueter operator} $D = D_P \co \Gamma(NP) \to \Gamma(NP)$ associated with $P$ is defined by
  \begin{equation*}
    D \coloneq - \gamma\nabla + \tau^\perp.
    \qedhere
  \end{equation*}
\end{definition}

\begin{prop}[{\citet[§5]{McLean1998}, \citet[Theorem 6]{Akbulut2008}, \citet[Theorem 2.1]{Gayet2014}, \citet[Theorem 2.12]{Joyce2016}}]
  \label{Prop_Associative_Linearisation+Remainder}
  Let $P \in \sS$ be associative.
  Let $\jmath \co U \imm V$ be a tubular neighborhood of $P$.
  There are a constant $c = c(\jmath) > 0$ and a smooth map $\sN = \sN_\jmath \in C^\infty(\Gamma(U),\Gamma(NP))$ such that
  \begin{equation*}
    \Inner{\bQ^*(\delta\Upsilon)(v),w}
    =
    \Inner{Dv + \sN(v),w}_{L^2}.
  \end{equation*}
  and
  \begin{equation*}
    \Abs{\sN(v)-\sN(w)}_{C^{0,\alpha}}
    \leq
    c \paren*{\Abs{v}_{C^{1,\alpha}}+\Abs{w}_{C^{1,\alpha}}}\Abs{v-w}_{C^{1,\alpha}}.
  \end{equation*}
\end{prop}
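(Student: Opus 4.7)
The plan is to express $\bQ^*(\delta\Upsilon)$ as an integral over the fixed submanifold $P$ by pulling forms back through $\jmath$. Set $\tilde\psi \coloneq \jmath^*\psi \in \Omega^4(U)$. For $v \in \Gamma(U)$, the image $\bQ(v) = \jmath(v(P))$ is parametrised by $\jmath \circ v$, and a tangent vector $w \in T_v\Gamma(U) = \Gamma(NP)$ is sent by $T\bQ$ to the vector field $\jmath_*\hat w$ along $\bQ(v)$, where $\hat w$ denotes the vertical extension of $w$ to the fibres of $NP \to P$. Unwinding the definition of $\delta\Upsilon$ yields
\begin{equation*}
  \Inner*{\bQ^*(\delta\Upsilon)(v), w}
  =
  \int_P v^*(i_{\hat w}\tilde\psi).
\end{equation*}
The right-hand side is linear in $w$ and, at each $p \in P$, depends only on $p$, $v(p)$, and $\nabla v(p)$. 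Consequently there is a smooth nonlinear first-order operator $F \co \Gamma(U) \to \Gamma(NP)$, arising from a smooth fibre-preserving morphism $J^1 U \to NP$, such that
\begin{equation*}
  \Inner{F(v), w}_{L^2}
  =
  \int_P v^*(i_{\hat w}\tilde\psi)
\end{equation*}
for every $w \in \Gamma(NP)$; here the $L^2$ pairing is taken with respect to the induced metric on $NP$ and the volume form of $P$.

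Next I would verify the two algebraic facts encoding associativity. First, $F(0) = 0$: this is the statement that $\delta\Upsilon_P = 0$, which holds because $P$ is associative by hypothesis. Second, the linearisation $F'(0)$ equals the Fueter operator $D$: this is precisely McLean's calculation cited in the statement, obtained by expanding
\begin{equation*}
  \tfrac{\rd}{\rd s}\bigg|_{s=0} (sv)^*(i_{\hat w}\tilde\psi) = \bigl(L_{\hat v} i_{\hat w}\tilde\psi\bigr)\big|_P
\end{equation*}
via Cartan's magic formula and using $\nabla\psi = \tau^\flat \wedge \phi$ to produce the torsion term $\tau^\perp v$ on top of the principal symbol piece $-\gamma\nabla v$ from \autoref{Def_Associative_Linearisation}. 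Setting $\sN(v) \coloneq F(v) - Dv$ then gives $\sN(0) = 0$ and $\sN'(0) = 0$, so $\sN$ vanishes to second order along the zero section of $J^1 U$.

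The Hölder estimate for $\sN$ reduces to a pointwise bound. Since $\sN$ is the restriction to sections of a smooth fibre-preserving morphism $J^1 U \to NP$ that vanishes to second order along the zero section, Taylor's theorem applied fibrewise yields
\begin{equation*}
  \abs{\sN(v)(p) - \sN(w)(p)}
  \leq c\paren[\big]{\abs{v(p)} + \abs{\nabla v(p)} + \abs{w(p)} + \abs{\nabla w(p)}}\paren[\big]{\abs{v(p)-w(p)} + \abs{\nabla v(p) - \nabla w(p)}}
\end{equation*}
uniformly in $p$, with $c$ depending only on $C^2$-bounds for $\tilde\psi$ on a fixed neighbourhood of the zero section, hence only on $\jmath$. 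Differentiating in $p$ produces an analogous bound controlling the Hölder seminorm of $\sN(v) - \sN(w)$; passing to the $C^{0,\alpha}$ norm is then routine because $P$ is compact. The main technical point I expect to grind against is the identification $F'(0) = D$: one must carefully separate the contributions of $\tilde\psi|_P = \psi|_P$, of the vertical derivative $\nabla v$, and of the torsion $\tau$, and check that they reassemble into $-\gamma\nabla + \tau^\perp$ exactly as in \autoref{Def_Associative_Linearisation}.
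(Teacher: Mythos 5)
Your proposal is correct and follows essentially the same route as the paper: both identify the linearisation of $v \mapsto \int_{\Gamma_v} i_w\jmath^*\psi$ at the zero section with $D$ via the Lie-derivative expansion of $i_w\psi$ (using associativity of $P$ to kill the zeroth-order tangential term, the identity $\nabla_v\psi = \tau(v)^\flat\wedge\phi$ for the torsion term, and the cross-product/associator algebra for the symbol $-\gamma\nabla$), and both obtain $\sN$ as a Taylor remainder whose quadratic Hölder estimate follows from uniform bounds on the underlying fibrewise first-order morphism together with compactness of $P$. The only cosmetic differences are that the paper packages the remainder as $\int_0^1 (T_{tv}f-T_0f)(v)\,\rd t$ rather than via second-order vanishing of $F - D$ along the zero section of $J^1U$, and that it carries out the frame computation identifying $T_0f$ with $-\gamma\nabla+\tau^\perp$ explicitly where you defer to the cited references.
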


\begin{remark}
  \label{Rmk_D_SelfAdjoint}
  If $\psi$ is closed,
  then $D$ is self-adjoint;
  indeed, it corresponds to the Hessian of $\Upsilon$.
\end{remark}

\begin{proof}[Proof of \autoref{Prop_Associative_Linearisation+Remainder}]
  To ease notation,
  set $f \coloneq \bP^*(\delta \Upsilon)$.
  Since
  \begin{equation*}
    \Inner{f(v),w}
    = \int_{\Gamma_v} i_w\jmath^*\psi,
  \end{equation*}
  $T_u f \co T_u\Gamma(U) = \Gamma(NP) \to \Gamma(NP)^*$ satisfies
  \begin{equation*}
    \Inner{T_uf(v),w}
    = \int_{\Gamma_u} \sL_vi_w\jmath^*\psi.
  \end{equation*}
  Since
  \begin{equation*}
    f(v)
    = T_0f(v) + \underbrace{\int_0^1 (T_{tv}f-T_0f)(v) \, \rd t}_{\eqcolon \Inner{\sN(v),-}_{L^2}
    },
  \end{equation*}
  it remains to identify $T_0f$ as $D$ and estimate $\sN(v)$.
  
  Choose a frame $(e_1,e_2,e_3)$ on $U$ which restricts to a positive orthonormal frame on $\Gamma_{tu}$ for every $t \in [0,1]$.
  Denote by $\nabla$ the Levi-Civita connection on of $\jmath^*g$ on $U$.
  To ease notation, henceforth suppress $\jmath$.
  Since $\nabla$ is torsion-free,
  \begin{equation}
    \label{Eq_IntegrandInHessUpsilon}
    \begin{split}
      \paren*{\sL_vi_w\psi}(e_1,e_2,e_3)
      &=
      \psi(\nabla_wv,e_1,e_2,e_3) + \Inner{\tau v,w} \phi(e_1,e_2,e_3). \\
      &
      + \psi(w,\nabla_{e_1}v,e_2,e_3)
      + \psi(w,e_1,\nabla_{e_2}v,e_3)
      + \psi(w,e_1,e_2,\nabla_{e_3}v).
    \end{split}
  \end{equation}
  A moment's thought derives the asserted estimate on $\sN$ from this;
  cf.~\cite[Remark 3.5.5]{McDuff2012}.
  
  Since $P$ is associative,
  on $P = \Gamma_0$,
  the first term in \autoref{Eq_IntegrandInHessUpsilon} vanishes and the second equals $\Inner{\tau^\perp v,w}$.
  To digest the second line of \autoref{Eq_IntegrandInHessUpsilon},
  define the cross-product $-\times- \co TY\otimes TY \to TY$ and the associator $[-,-,-] \co TY\otimes TY\otimes TY \to TY$ by
  \begin{equation*}
    \Inner{u\times v,w} \coloneq \phi(u,v,w)
    \qandq
    \psi(u,v,w,x) \coloneq \Inner{[u,v,w],x}.
  \end{equation*}
  These are related by
  \begin{equation*}    
    [u,v,w] = (u\times v)\times w + \Inner{v,w}u - \Inner{u,w}v.
  \end{equation*}
  Therefore,
  \begin{equation*}
    \psi(w,\nabla_{e_i}v,e_j,e_k)
    =
    -\Inner{w,(e_j\times e_k)\times \nabla_{e_i}v};
  \end{equation*}
  cf.~\cite[§4]{Salamon2010}.
  Since $P$ is associative,
  $e_i \times e_j = \sum_{k=1}^3 \epsilon_{ij}^{\phantom{ij}k}e_k$.
  Therefore,
  the second line of \autoref{Eq_IntegrandInHessUpsilon} is
  \begin{equation*}
    -\sum_{a=1}^3 \Inner{e_a \times \nabla_{e_a}v,w} = -\Inner{\gamma\nabla v,w}.
    \qedhere
  \end{equation*}
\end{proof}

In \autoref{Ex_Model_Associative},
the operator $D$, governing the infinitesimal deformation theory of $P = P_{[\eta]}$, can be understood rather concretely.

\begin{example}
  \label{Ex_Model_Associative_D}
  Assume the situation of \autoref{Ex_Model_Associative} with $\mu = \id_\Sigma$.
  Evidently,
  \begin{equation*}
    TP_{[\eta]} = \R\xi \oplus T\Sigma
    \qandq
    NP_{[\eta]} = (\R \xi)^\perp \oplus N\Sigma.
  \end{equation*}
  Direct inspection reveals that $\gamma(- \cdot \xi^\flat)$ defines a complex structure $i$ on $(\R\xi)^\perp$ and agrees with $-I_\xi$ on $N\Sigma$;
  moreover, for $\zeta\cdot v^\flat \in \Hom(T\Sigma,(\R\xi)^\perp)$
  \begin{equation*}
    \gamma(\zeta \cdot v^\flat) = I_\zeta v \in N\Sigma
  \end{equation*}
  A moment's thought shows that
  \begin{equation*}
    \gamma(\zeta \cdot v^\flat I_\xi)
    =
    I_\xi\gamma(\zeta \cdot v^\flat)
    =
    \gamma(i\zeta\cdot v^\flat).
  \end{equation*}
  Therefore,
  the restriction of $\gamma$ to $\Hom(T\Sigma,(\R\xi)^\perp)$ is the composition of
  a complex linear isomorphism
  \begin{equation*}
    \kappa \co \overline\Hom_\C(T\Sigma,(\R\xi)^\perp) \iso N\Sigma
  \end{equation*}
  and the projection $(-)^{0,1} \co \Hom(T\Sigma,(\R\xi)^\perp) \to \overline\Hom_\C\paren{T\Sigma,(\R\xi)^\perp}$ defined by $A^{0,1} \coloneq \frac12(A + IAI)$.
  Therefore,
  \begin{equation*}
    D = D_{P_{[\eta]}}
    =
    (-i\oplus I_\xi)\cdot \del_\xi
    -
    \begin{pmatrix}
      0 & \delbar^*\kappa^* \\
      \kappa\delbar & 0
    \end{pmatrix}
  \end{equation*}
  with the Cauchy--Riemann operator $\delbar \co C^\infty(\Sigma,(\R\xi)^\perp) \to \Gamma\paren[\big]{\overline\Hom_\C\paren{T\Sigma,(\R\xi)^\perp}}$ defined by
  \begin{equation*}
    (\delbar f)(v) \coloneq (
    \rd f)^{0,1}(v) = \frac12\paren{\nabla_v f + i\nabla_{I_\xi v} f}
  \end{equation*}
  and $\delbar^*$ denoting its formal adjoint.
  In particular,
  \begin{equation*}
    \ker D_{P_{[\eta]}} \iso (\R\xi)^\perp \oplus \rH^{0,1}\paren{\Sigma,(\R\xi)^\perp}.
    \qedhere
  \end{equation*}
\end{example}

If $\coker D = 0$, then $P$ is \defined{unobstructed} and stable under perturbations of the $\Gtwo$--structure $\phi$.
In \autoref{Ex_Model_Associative}, $P_{[\eta]}$ is \emph{never} unobstructed,
but does satisfy the following if $\Sigma = S^2$ because $(\R\xi)^\perp = T_{[\eta]}M$.%
\footnote{%
  If $P_{[\eta]}$ is multiply covering,
  then the underlying embedded associative submanifold might be unobstructed;
  see \autoref{Rmk_MultipleCover_Unobstructed}.  
}

\begin{definition}
  \label{Def_MorseBott}
  A smooth map $\bP \co M \to \sS$ is a \defined{Morse--Bott} family of ($\phi$--)associative submanifolds if it is an immersion and for every $x \in M$
  \begin{equation*}
    (\delta\Upsilon)_{\bP(x)} = 0
    \qandq
    \ker D_{\bP(x)} = \im T_x\bP.
    \qedhere    
  \end{equation*}
\end{definition}

Morse--Bott families of $\phi$--associative submanifolds are not stable under small deformations of the $\Gtwo$--structure;
however, the hypothesis of \autoref{Lem_Trivial} can be arranged.
Most of the remainder of this section is devoted to establishing this.
Henceforth, the choice of $\Gtwo$--structure $\phi \in \Omega^3(Y)$ made at the beginning of this section shall be undone.

\begin{definition}
  \label{Def_FamilyStructures}
  Let $\bP_0 \co M \to \sS$ be a smooth map.
  Consider the fibre bundle
  \begin{equation*}
    p \co \underline\bP_0 \coloneq \coprod_{x \in M} \bP_0(x) \incl M \times Y
    \to
    M.
  \end{equation*}
  \begin{enumerate}
  \item
    The \defined{normal bundle} of $\bP_0$ is the vector bundle
    \begin{equation*}
      q \co N\bP_0 \coloneq \coprod_{x \in M} N\P_0(x) \to \underline\bP_0.
    \end{equation*}
    There is a canonical isomorphism $N\bP_0 \iso N\underline\bP_0 \coloneq T(M\times Y)/T\underline\bP_0$.
  \item
    A \defined{tubular neighborhood} of $\bP_0$ is a tubular neighborhood $\bm{\jmath} \co \bU \imm M\times Y$ of $\underline\bP_0$ with $\pr_M \circ \bm{\jmath} = p \circ q$.
    In particular, for every $x \in M$, $\bm\jmath$ induces a tubular neighborhood $\jmath_x \co U_x \imm Y$ of $\bP_0(x)$.
  \item
    \label{Def_FamilyStructures_T}
    Consider the vector bundle
    \begin{equation*}
      r \co \bE \coloneq \coprod_{\bP_0 \in C^\infty(M,\sS)} \Gamma\paren[\big]{\Hom(TM,\bP_0^*T\sS)} \to C^\infty(M,\sS).
    \end{equation*}
    Differentiation defines a section $T \in \Gamma(\bE)$.
    Let $\bm\jmath \co \bU \imm M \times Y$ be a tubular neighborhood of $\bP_0$.
    The map $\bQ_{\bm\jmath} \co \Gamma(\bU) \to C^\infty(M,\sS)$ defined by
    \begin{equation*}
      \bQ_{\bm\jmath}(v)(x) \coloneq \bQ_{\jmath_x}(v) \qwithq
      v_x \coloneq v|_{\bP_0(x)}
    \end{equation*}
    is (the inverse of) a chart on $C^\infty(M,\sS)$.
    Within this chart $\bE$ is trivialised and $T$ is identified with a smooth map $\bT = \bT_{\bm\jmath} \in C^\infty\paren[\big]{\Gamma(\bU),\Gamma\paren{\Hom(p^*TM,N\bP_0)}}$;
    that is:
    the diagram
    \begin{equation*}
      \begin{tikzcd}
        \Gamma(\bU) \times \Gamma(\Hom(p^*TM,N\bP_0)) \ar{r} \ar[shift left]{d} & \bE \ar[shift right]{d}[swap]{r} \\
        \Gamma(\bU) \ar{r}{\bQ} \ar[shift left]{u}{(\id,\bT)} & C^\infty(M,\sS) \ar[u,swap,"T",shift right]
      \end{tikzcd}
    \end{equation*}
    commutes.
    (See \autoref{Fig_FamilyStructures} and \autoref{Rmk_FamilyStructures_T}.)
  \end{enumerate}
  Henceforth, suppose that $\phi_0$ is a $\Gtwo$--structure and that $\bP_0(x)$ is $\phi_0$--associative for every $x \in M$.
  \begin{enumerate}[resume]
  \item
    Define $D = D_{\bP_0} \co \Gamma(N\bP_0) \to \Gamma(N\bP_0)$ by
    \begin{equation*}
      (D v)|_{\bP_0(x)} \coloneq D_{\bP_0(x)} \paren{v|_{\bP_0(x)}}.
    \end{equation*}
    Set
    \begin{equation*}
      \sV
      \coloneq
      \set{
        v \in \Gamma(N\bP_0)
        :
        v|_{\bP_0(x)} \perp \im T_x\bP_0
        ~\textnormal{for every}~ x \in M
      }      
    \end{equation*}
    with $\perp$ denoting $L^2$ orthogonality.
    Denote by $D^\perp = D_{\bP_0}^\perp \co \sV \to \sV$ the map induced by $D$.
  \item
    Let $\bm\jmath \co \bU \imm M \times Y$ be a tubular neighborhood of $\bP_0$.
    Define $\sN = \sN_{\bm\jmath} \co C^\infty(\Gamma(\bU),\Gamma(N\bP_0))$ by
    \begin{equation*}
      (\sN v)|_{\bP_0(x)} \coloneq \sN_{\jmath_x} \paren{v}|_{\bP_0(x)}.
      \qedhere
    \end{equation*}
  \end{enumerate}
\end{definition}

\begin{remark}
  \label{Rmk_FamilyStructures_T}
  The upcoming \autoref{Prop_PerturbationMorseBottAssociatives} constructs a perturbation $\bP \coloneq \bQ_{\bm\jmath}(v)$ of $\bP_0$.
  To establish one of the desired properties of $\bP$, it is necessary to compare the derivatives $T\bP$ and $T\bP_0$.
  The purpose of the map $\bT$ is to enable this.
\end{remark}

\begin{figure}
  \centering
  \begin{tikzpicture}
    \draw[thick] (0,0) -- (4,3) node[right] {\footnotesize $\bP_0$};
    \foreach \x in {0,.2,...,4} { 
      \draw[densely dotted,gray] (\x,.75*\x-.5) -- (\x,.75*\x+.5);
    }
    \draw[cyan,thick] plot [smooth] coordinates {(0,-.25) (2,1.75) (4,2.625)} node [right] {$\bQ(v)$};
    \draw[gray] (-.5,-.5) -- (-.5,3.5) node[above] {\footnotesize $Y$};
    \draw[gray] (0,-1) -- (4,-1) node[right] {\footnotesize $M$};
    \node[magenta] at (2,-1)[circle,fill,inner sep=1pt]{};
    \draw[-stealth,magenta,semithick] (2,-1) -- (2.4,-1) node[midway,above] {\footnotesize $\hat x$};
    \node[magenta] at (2,1.75)[circle,fill,inner sep=1pt]{};
    \draw[-stealth,magenta,semithick] (2,1.75) -- (2,2.25) node[above] {\footnotesize $\bT(v)(\hat x)$};
  \end{tikzpicture}  
  \caption{A sketch of the situation of \autoref{Def_FamilyStructures}~\autoref{Def_FamilyStructures_T}.}
  \label{Fig_FamilyStructures}
\end{figure}

\begin{example}
  \label{Ex_Model_Associative_N}
  In the situation of \autoref{Ex_Model_Associative} with $\mu = \id_\Sigma$,
  $M = T^2$,
  $\underline\bP = T^2 \times (S^1 \times \Sigma)$ and $N\bP = TT^2 \oplus N\Sigma$.
  $D_{\bP(x)}$ and $\sN_{\jmath_x}$---for a suitable choice of $\bm\jmath$ and with respect to suitable identifications---are independent of $x \in T^2$.
\end{example}

\begin{definition}
  \label{Def_HolderNorm}
  Let $\bP \co M \to \sS$ be a smooth map.
  Suppose that Riemannian metrics on $M$ and $Y$ are given.
  This induces a Euclidean inner product and an orthogonal covariant derivative $\nabla$ on $N\bP$, and
  an Ehresmann connection on $p \co \underline\bP \to M$.
  Denote by $\nabla^{1,0}$ and $\nabla^{0,1}$ the restriction of $\nabla$ to the horizontal and vertical directions respectively.
  Denote by
  \begin{equation*}
    \fP \coloneq \coprod_{x \in M} C^\infty([0,1],p^{-1}(x))
  \end{equation*}
  the set of vertical paths in $p \co \underline\bP \to M$.
  Denote by $\fP^+ \subset \fP$ the subset of non-constant paths.
  For $\alpha \in (0,1)$ set
  \begin{equation*}
    [v]_{C^0C^{0,\alpha}}
    \coloneq
    \sup_{\gamma \in \fP^+}
    \frac{\abs{\tra_\gamma(v(\gamma(0)))-v(\gamma(1))}}{\ell(\gamma)^\alpha}
    \qandq
    \Abs{v}_{C^0C^{0,\alpha}}
    \coloneq
    \Abs{v}_{C^0} + [v]_{C^0C^{0,\alpha}}
  \end{equation*}
  with $\ell(\gamma)$ denoting the length of $\gamma$ and $\tra_\gamma$ denoting parallel transport along $\gamma$.
  For $k,\ell \in \N_0,\alpha \in (0,1)$ define the norm $\Abs{-}_{C^kC^{\ell,\alpha}}$ on $\Gamma(N\bP)$ by
  \begin{equation*}
    \Abs{v}_{C^kC^{\ell,\alpha}}
    \coloneq \sum_{m=0}^k \sum_{n=0}^\ell
    \Abs{\paren{\nabla^{1,0}}^m\paren{\nabla^{0,1}}^n v}_{C^0C^{0,\alpha}}.
    \qedhere
  \end{equation*}
\end{definition}

\begin{prop}
  \label{Prop_PerturbationMorseBottAssociatives}
  Let $\alpha \in (0,1)$, $\beta,\gamma,c_1,c_2,c_3,c_4,c_5,R > 0$.
  If $2\beta > \gamma$, then there are constants
  $T = T(\alpha,\beta,\gamma,c_1,c_2,c_3,c_4,c_5,R) > 0$ and
  $c_v = c_v(\alpha,\beta,\gamma,c_1,c_2,c_3) > 0$
  with the following significance.
  Let $\phi_0, \phi \in \Omega^3(Y)$ be two $\Gtwo$--structures on $Y$.  
  Let $\bP_0 \co M \to \sS$ be a Morse--Bott family of $\phi_0$--associative submanifolds.
  Let $\bm{\jmath} \co \bU \imm \bV$ tubular neighborhood of $\bP_0$.
  Let $t \in (0,T)$.
  Suppose that:
  \begin{enumerate}
  \item
    \label{Prop_PerturbationMorseBottAssociatives_Tube}
    $B_R(0) \subset U_x$.
  \item
    \label{Prop_PerturbationMorseBottAssociatives_Error}
    $\Abs{\bm\jmath^*(\phi-\phi_0)}_{C^{1,\alpha}(\bU)} \leq c_1 t^\beta$.
  \item
    \label{Prop_PerturbationMorseBottAssociatives_Linear}
    $D^\perp \co \sV \to \sV$ is bijective and
    \begin{equation*}
      \Abs{v}_{C^1C^{1,\alpha}} \leq c_2 t^{-\gamma} \Abs{D_x^\perp v}_{C^1C^{0,\alpha}}.
    \end{equation*}
  \item
    \label{Prop_PerturbationMorseBottAssociatives_NonLinear}
    $\sN \in C^\infty(\Gamma(\bU),\Gamma(N\bP_0))$ satisfies
    \begin{equation*}
      \Abs{\sN(v)-\sN(w)}_{C^1C^{0,\alpha}}
      \leq
      c_3 \paren*{\Abs{v}_{C^1C^{1,\alpha}}+\Abs{w}_{C^1C^{1,\alpha}}}\Abs{v-w}_{C^1C^{1,\alpha}}.
    \end{equation*}
  \item
    \label{Prop_PerturbationMorseBottAssociatives_T}
    For every $\hat x \in T M$ and $v \in \Gamma(\bU)$
    \begin{equation*}
      \abs{\hat x} \leq c_4 \Abs{\bT(0)(\hat x)}_{C^0}
      \qandq
      \Abs{\bT(v) - \bT(0)}_{C^0} \leq c_5\Abs{v}_{C^1C^{1,\alpha}}.
    \end{equation*}
  \end{enumerate}
  In this situation,
  there is a $v \in \Gamma(\bU) \subset \Gamma(N\bP_0)$ with
  $\Abs{v}_{C^1C^{1,\alpha}} \leq c_vt^{\beta-\gamma}$
  such that the map
  \begin{equation}
    \label{Eq_P}
    \bP \coloneq \bQ_{\bm\jmath}(v) \co M \to \sS
  \end{equation}
  satisfies the hypothesis of \autoref{Lem_Trivial} with respect to $\phi$.
  Moreover, if $H$ is a finite group acting on $M$ and $Y$, $\phi_0$ and $\phi$ are $H$--invariant, and $\bm\jmath$ and $\bP_0$ are $H$--equivariant,
  then $\bP$ is $H$--equivariant.
\end{prop}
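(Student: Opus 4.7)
The plan is to cast the problem as a single nonlinear equation on $\sV$ and solve it by Banach contraction. The hypothesis of \autoref{Lem_Trivial} at a candidate $\bP(x)$ is that $\ker(\delta\Upsilon^\psi)_{\bP(x)}+\im T_x\bP = T_{\bP(x)}\sS$. Since $\bP_0$ is Morse--Bott and $\psi_0$ is closed, \autoref{Rmk_D_SelfAdjoint} gives $\im T_x\bP_0 = \ker D_{\bP_0(x)} = \sV^\perp|_x$ (fibrewise $L^2$-orthogonal to $\sV$). It therefore suffices to find a small $v\in\sV$ such that $\bQ_{\bm\jmath}^*(\delta\Upsilon^\psi)(v)$ annihilates $\sV$: the resulting $\bP\coloneq\bQ_{\bm\jmath}(v)$ then has $T_v\bQ(\sV)\subset\ker(\delta\Upsilon^\psi)_{\bP(x)}$, and hypothesis \autoref{Prop_PerturbationMorseBottAssociatives_T} will keep $\im T_x\bP$ close enough to $\im T_x\bP_0$ for the two subspaces to span.

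Applying \autoref{Prop_Associative_Linearisation+Remainder} fibrewise for $\phi_0$ and packaging the change of $\Gtwo$-structure into a single error term yields
\[
\bQ_{\bm\jmath}^*(\delta\Upsilon^{\psi})(v) \;=\; \Inner{Dv + \sN(v) + e_\phi(v),-}_{L^2},
\]
where $e_\phi(v)(w)=\int_{\Gamma_v}i_w\,\bm\jmath^*(\psi-\psi_0)$. Differentiation under the integral sign combined with hypothesis \autoref{Prop_PerturbationMorseBottAssociatives_Error} produces $\Abs{e_\phi(v)}_{C^1C^{0,\alpha}}\lesssim t^\beta$ and $\Abs{e_\phi(v)-e_\phi(w)}_{C^1C^{0,\alpha}}\lesssim t^\beta\,\Abs{v-w}_{C^1C^{1,\alpha}}$ in the mixed Hölder norms of \autoref{Def_HolderNorm}. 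Projecting to $\sV$ and inverting $D^\perp$ via hypothesis \autoref{Prop_PerturbationMorseBottAssociatives_Linear}, the problem becomes the fixed-point equation
\[
v \;=\; F(v) \;\coloneq\; -(D^\perp)^{-1}\pi_{\sV}\bigl(\sN(v)+e_\phi(v)\bigr)
\]
on a ball $B_\rho\subset\sV$ in the norm $\Abs{-}_{C^1C^{1,\alpha}}$.

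I would then run Banach contraction on $B_\rho$ with $\rho\sim t^{\beta-\gamma}$. Hypotheses \autoref{Prop_PerturbationMorseBottAssociatives_Linear} and \autoref{Prop_PerturbationMorseBottAssociatives_NonLinear} give $\Abs{F(0)}_{C^1C^{1,\alpha}}\lesssim t^{\beta-\gamma}$ and $\Abs{F(v)-F(w)}_{C^1C^{1,\alpha}}\lesssim t^{-\gamma}(\rho+t^\beta)\Abs{v-w}_{C^1C^{1,\alpha}}$. The assumption $2\beta>\gamma$ should suffice to push the contraction factor below $1/2$ for $t$ beneath some $T$, after which the scheme converges to a unique $v$ satisfying $\Abs{v}_{C^1C^{1,\alpha}}\leq c_v t^{\beta-\gamma}$. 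I expect the main technical work to lie here: making precise that $e_\phi$ and its Lipschitz constant in $v$ are genuinely $O(t^\beta)$ in the mixed Hölder norms, and that the constants extracted from the various hypotheses combine compatibly under $2\beta>\gamma$.

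With $\bP\coloneq\bQ_{\bm\jmath}(v)$, the \autoref{Lem_Trivial} hypothesis is then verified by openness: the fixed-point equation makes $T_v\bQ(\sV)$ lie in $\ker(\delta\Upsilon^\psi)_{\bP(x)}$, while the two bounds of hypothesis \autoref{Prop_PerturbationMorseBottAssociatives_T} and the smallness of $\Abs{v}$ keep $\bT(v)$ a fibrewise injection with uniform lower bound whose image is $O(t^{\beta-\gamma})$-close to $\bT(0)(TM)$, which the Morse--Bott hypothesis identifies with $\sV^\perp$. Thus for $T$ small enough these two subspaces span $T_{\bP(x)}\sS$. Equivariance is automatic: all input data is $H$-equivariant, so $F$ commutes with the $H$-action, and uniqueness of its fixed point forces $v$ (and hence $\bP$) to be $H$-equivariant.
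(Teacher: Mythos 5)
Your proposal is correct and follows essentially the same route as the paper's proof: the same reduction to the fixed-point equation $f(v)^\perp=0$ on $\sV$ (your $e_\phi(v)$ is the paper's $f(v)-f_0(v)$), the same contraction argument on a ball of radius $\sim t^{\beta-\gamma}$, the same use of hypothesis \autoref{Prop_PerturbationMorseBottAssociatives_T} to compare $\bT(v)$ with $\bT(0)$ when verifying the hypothesis of \autoref{Lem_Trivial}, and the same uniqueness argument for equivariance. The only cosmetic difference is that the paper phrases the final transversality check via the equivalent codimension-one criterion ``$f_x(v)=0$ or $f_x(v)\notin(\im\bT_x(v))^\perp$'' rather than your subspace-spanning formulation.
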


\begin{proof}  
  To ease notation,
  define $f_0,f \in C^\infty(\Gamma(\bU),\Gamma(N\bP_0))$
  by
  \begin{equation*}
    \Inner{f_0(v)|_{\bP_0(x)},w}_{L^2} \coloneq \Inner{\paren{\bQ_{\jmath_x}^*(\delta\Upsilon^{\psi_0})}(v),w}
    \qandq
    \Inner{f(v)|_{\bP_0(x)},w}_{L^2} \coloneq \Inner{\paren{\bQ_{\jmath_x}^*(\delta\Upsilon^\psi)}(v),w}.
  \end{equation*}
  Denote by $(-)^\perp$ the projection onto $\sV$.
  For $v \in \sV$
  \begin{equation*}
    (D^\perp)^{-1}f(v)^\perp
    =
    v + \underbrace{(D^\perp)^{-1}\paren*{\sN(v) + f(v) - f_0(v)}^\perp}_{\coloneq E(v)}.
  \end{equation*}
  By \autoref{Prop_PerturbationMorseBottAssociatives_Error}, \autoref{Prop_PerturbationMorseBottAssociatives_Linear}, and \autoref{Prop_PerturbationMorseBottAssociatives_NonLinear},
  there is a constant $c_E = c_E(\alpha,\beta,\gamma,c_1,c_2,c_3) > 0$ such that
  for every $r \in (0,R)$ and $v,w \in \overline B_r(0) \subset C^1C^{1,\alpha}\Gamma(N\bP_0)$
  \begin{align*}
    \Abs{E(0)}_{C^1C^{1,\alpha}}
    &
      \leq c_Et^{\beta-\gamma} \qand
    \\
    \Abs{E(v) - E(w)}_{C^1C^{1,\alpha}}
    &
      \leq c_E(r+t^{\beta})t^{-\gamma} \Abs{v-w}_{C^1C^{1,\alpha}}.
  \end{align*}
  Therefore,
  $-E$ defines a contraction on $\overline B_r(0) \subset C^1C^{1,\alpha}\Gamma(N\bP_0)$ provided
  \begin{equation*}
    c_E(r + t^\beta)t^{-\gamma} < 1
    \qandq
    c_E t^{\beta-\gamma} + c_E(r + t^\beta)t^{-\gamma} r \leq r.
  \end{equation*}
  These can be seen to hold for $r \coloneq 2c_Et^{\beta-\gamma}$ and $t \leq T \ll 1$.
  Denote by $v \in \overline B_r(0) \subset C^1C^{1,\alpha}\Gamma(N\bP_0)$ the unique solution of
  \begin{equation*}
    f(v)^\perp = 0.
  \end{equation*}
  By elliptic regularity, $v \in \Gamma(\bU)$.
  
  It remains to prove that $\bP$ defined by \autoref{Eq_P} satisfies the hypothesis of \autoref{Lem_Trivial};
  that is: for every $x \in M$  
  \begin{equation*}
    \ker (\delta\Upsilon^\psi)_{\bP(x)} + \im T_x\bP = T_{\bP(x)}\sS,
  \end{equation*}
  or, equivalently,
  \begin{equation*}
    f_x(v) = 0 \quad\textnormal{or}\quad
    f_x(v) \notin \paren*{\im \bT_x(v)}^\perp.
  \end{equation*}
  Here the subscript $x$ indicates restriction to $\bP_0(x)$.
  By construction, $f_x(v) \in \im \bT_x(0)$.
  Therefore, the hypothesis is satisfied by \autoref{Prop_PerturbationMorseBottAssociatives_T} provided $t \leq T \ll 1$.

  Evidently, this construction preserves $H$--equivariance.
\end{proof}

In \autoref{Ex_Model_Associative} with $\Sigma = S^2$,
the following gives the required estimate on $D$.

\begin{situation}
  \label{Sit_DelT+A}
  Let $X$ be a compact oriented Riemannian manifold.
  Let $V$ be a Euclidean vector bundle over $X$.
  Let $A \co \Gamma(V) \to \Gamma(V)$ be a formally self-adjoint linear elliptic differential operator of first order.
  Denote by $\pi \co \Gamma(V) \to \ker A$ the $L^2$ orthogonal projection onto $\ker A$.
  Let $L > 0$.
  Define $\Pi \co \Gamma\paren{(\R/L\Z) \times X,V} \to \ker A$ by
  \begin{equation*}
    \Pi s \coloneq \fint_0^L \pi(i_t^*s) \, \rd t
  \end{equation*}
  with $i_t(x) \coloneq (t,x)$.
\end{situation}

\begin{remark}
  \label{Rmk_Model_Associative_D=DelT+A}
  In situation of \autoref{Ex_Model_Associative} with $\mu = \id_\Sigma$,
  according to \autoref{Ex_Model_Associative_D}
  \begin{equation*}    
    D_{P_{[\eta]}}
    =
    (-i\oplus I_\xi)\cdot (\del_\xi+A)
    \qwithq
    A
    \coloneq
    \begin{pmatrix}
      0 & i\delbar^*\kappa^* \\
      -\kappa\delbar i & 0
    \end{pmatrix}.
    \qedhere
  \end{equation*}
\end{remark}

\begin{prop}
  \label{Prop_DelT+A}
  In \autoref{Sit_DelT+A},
  for every $\alpha \in (0,1)$
  there is a constant $c = c(A,\alpha) > 0$ such that for every $s \in \Gamma((\R/L\Z) \times X,V)$
  \begin{equation*}
    \Abs{s}_{C^{1,\alpha}}
    \leq
    c\paren[\big]{(L+1)\Abs{(\del_t+A)s}_{C^{0,\alpha}} + \Abs{\Pi s}_{L^\infty}}.
  \end{equation*}
\end{prop}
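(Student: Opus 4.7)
The plan is to decompose $s$ according to the $L^2$-orthogonal splitting of each fibre $\{t\}\times X$ into $\ker A$ and its complement, and to treat the two pieces separately. Write
\begin{equation*}
  s = s_0 + s_\perp
  \qwithq
  s_0(t,\cdot) \coloneq \pi\bigl(s(t,\cdot)\bigr)
  \qandq
  s_\perp \coloneq s - s_0.
\end{equation*}
Since $A$ is self-adjoint, $\pi A = A\pi = 0$, so the equation $(\del_t + A)s = g$ decouples into
\begin{equation*}
  \del_t s_0 = \pi g
  \qandq
  (\del_t + A)s_\perp = (1-\pi)g.
\end{equation*}

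First I would estimate $s_\perp$. Fourier expansion $s_\perp(t,x) = \sum_{k\in\Z} e^{2\pi i k t/L} s_{\perp,k}(x)$ with $s_{\perp,k}\in(\ker A)^\perp$ diagonalises $\del_t + A$ as the family of operators $\tfrac{2\pi i k}{L} + A$ acting on $(\ker A)^\perp$. Self-adjointness of $A$ and the spectral gap $\delta \coloneq \inf\{|\lambda|:\lambda\in\spec A\setminus\{0\}\}>0$ give $\Abs{(\tfrac{2\pi i k}{L}+A)u}^2 = (\tfrac{2\pi k}{L})^2\Abs{u}^2 + \Abs{Au}^2 \geq \delta^2\Abs{u}^2$ on $(\ker A)^\perp$. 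Parseval then yields the uniform-in-$L$ bound $\Abs{s_\perp}_{L^2} \leq \delta^{-1}\Abs{(\del_t + A)s}_{L^2}$. Bootstrapping via the interior Schauder estimate for the first-order elliptic operator $\del_t + A$ — applied on balls of fixed radius in the universal cover $\R\times X$, where $\del_t + A$ has bounded, translation-invariant coefficients, so that Schauder constants do not depend on $L$ — upgrades this to
\begin{equation*}
  \Abs{s_\perp}_{C^{1,\alpha}} \leq c_\perp(A,\alpha)\Abs{(\del_t+A)s}_{C^{0,\alpha}}.
\end{equation*}

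Next I would estimate $s_0$. It is an $L$-periodic map from $\R$ to the finite-dimensional space $\ker A$, satisfying $\del_t s_0 = \pi g$. Integrating along the circle gives $\Abs{s_0(t,\cdot)-s_0(\tau,\cdot)}_{C^0(X)} \leq L\Abs{\del_t s_0}_{C^0}$; since $\Pi s$ is the average of $s_0$ over $\R/L\Z$, a Poincaré-type inequality yields
\begin{equation*}
  \Abs{s_0}_{C^0} \leq \Abs{\Pi s}_{L^\infty} + L\Abs{(\del_t + A)s}_{C^0}.
\end{equation*}
Another application of interior Schauder produces $\Abs{s_0}_{C^{1,\alpha}} \leq c_0(A,\alpha)\bigl(\Abs{(\del_t+A)s}_{C^{0,\alpha}} + \Abs{s_0}_{C^0}\bigr)$; combining with the previous display gives the stated $(L+1)$ factor. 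Adding the estimates for $s_\perp$ and $s_0$ completes the proof.

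\paragraph{Main obstacle.}
The delicate point is keeping all constants uniform in $L$, in both the $L\to 0$ and $L\to\infty$ regimes. The $L$-independence of the Schauder estimate is obtained by lifting $L$-periodic sections to $\R\times X$ and applying local Schauder on balls of fixed radius, where bounded geometry holds uniformly. The $L$-independence of the Fourier estimate for $s_\perp$ rests crucially on self-adjointness of $A$: were $A$ not self-adjoint, the smallest singular value of $\tfrac{2\pi i k}{L}+A$ on $(\ker A)^\perp$ would not be bounded below by the spectral gap $\delta$ uniformly in $k$ and $L$, and the argument would break down.
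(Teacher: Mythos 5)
Your proposal is correct and follows essentially the same strategy as the paper: the same fibrewise decomposition $s = \hat\pi s + (\one-\hat\pi)s$ (your $s_0 + s_\perp$), the same use of the commutation of $\hat\pi$ with $\del_t + A$, the same fundamental-theorem-of-calculus/Poincaré step relating $\hat\pi s$ to its average $\Pi s$ with the factor of $L$, and the same final combination via interior Schauder estimates. The only divergence is in how the uniform bound on the off-kernel piece is obtained: you prove it constructively via Fourier expansion in $t$, self-adjointness, and the spectral gap (Parseval giving an $L$-independent $L^2$ bound, then bootstrapping), whereas the paper invokes a compactness-contradiction argument in the style of \cite[Proof of Proposition 8.5]{Walpuski2011}; both are valid, and your version has the minor advantage of making the constant's dependence on $\spec A$ explicit.
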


\begin{proof}
  By interior Schauder estimates (see, e.g., \cite[§3]{Kichenassamy2006})
  \begin{equation*}
    \Abs{s}_{C^{1,\alpha}}
    \leq
    c_1\paren*{\Abs{(\del_t+A)s}_{C^{0,\alpha}} + \Abs{s}_{L^\infty}}.
  \end{equation*}  
  Define $\hat\pi \co \Gamma((\R/LZ) \times X,V) \to \Gamma((\R/LZ) \times X,V)$ by
  \begin{equation*}
    (\hat\pi s)(t,x) \coloneq \paren{\pi(i_t^*s)}(x).
  \end{equation*}
  A contradiction argument proves that  
  \begin{equation*}
    \Abs{(\one-\hat\pi)s}_{L^\infty}
    \leq c_2\Abs{(\del_t+A)(\one-\hat\pi)s}_{C^{0,\alpha}}
    \leq c_2\paren[\big]{\Abs{(\del_t+A)s}_{C^{0,\alpha}} + \Abs{(\del_t+A)\hat\pi s}_{C^{0,\alpha}}};
  \end{equation*}
  cf.~\cite[Proof of Proposition 8.5]{Walpuski2011}.
  As a consequence of the fundamental theorem of calculus
  \begin{equation*}
    \Abs{\hat\pi s}_{L^\infty}
    \leq
    L\Abs{\del_t \hat\pi s}_{L^\infty} + \Abs{\Pi s}_{L^\infty}
    =
    L\Abs{(\del_t+A)\hat\pi s}_{L^\infty} + \Abs{\Pi s}_{L^\infty}.
  \end{equation*}
  Therefore,
  \begin{equation*}
    \Abs{s}_{L^\infty}
    \leq
    c_2\Abs{(\del_t+A)s}_{C^{0,\alpha}}
    + (c_2+L)\Abs{(\del_t+A)\hat\pi s}_{L^\infty}.
  \end{equation*}
  Evidently,
  \begin{equation*}
    \hat\pi (\del_t + A) = (\del_t + A)\hat\pi.
  \end{equation*}
  Therefore,
  \begin{equation*}
    \Abs{(\del_t+A)\hat\pi s}_{C^{0,\alpha}} \leq c_3\Abs{(\del_t+A)s}_{C^{0,\alpha}}.
  \end{equation*}
  The above observations combine to the asserted estimate with $c = c_1(c_2+1)(c_3+1)$. 
\end{proof}


\section{Examples}
\label{Sec_Examples}

The purpose of this section is to construct the associative submanifolds whose existence was promsied in \autoref{Sec_Introduction}.
Here is a construction technique based on \autoref{Prop_PerturbationMorseBottAssociatives} and \autoref{Rmk_Trivial}~\autoref{Rmk_Trivial_H}.

\begin{prop}
  \label{Prop_ExistenceOfAssociatives_Orbifold}
  Let $\fR = (\Gamma_\alpha,G_\alpha,\rho_\alpha;R_\alpha,\jmath_\alpha;\hat X_\alpha,\hat\bomega_\alpha,\hat\rho_\alpha,\tau_\alpha)_{\alpha\in A}$ be resolution data for a closed flat $\Gtwo$--orbifold $(Y_0,\phi_0)$.
  Denote by $(Y_t,\phi_t)_{t \in (0,T_0)}$ the family of closed $\Gtwo$--manifolds obtained from the generalised Kummer construction discussed in \autoref{Sec_GeneralisedKummer}.
  Let $\star \in A$, $\hat \xi \in S^2 \subset \Im \H$, $L > 0$, and $\Sigma \subset X_\star$.
  Set $\xi \coloneq L\hat\xi$, $\Lambda_\star \coloneq G_\star \cap \Im \H < \Im \H$, $M_\star \coloneq \paren{\Im\H/\R\xi}/\paren{\Lambda_\star/\Z\xi}$, and $H_\star \coloneq G_\star/\Lambda_\star$.
  Denote by $n_f$ the number of singularities of the orbifold $M_\star/H_\star$ (see~\autoref{Rmk_Model_Associative_Possibilities}).
  Denote by $\bI_\star$ the hypercomplex structure on $X_\star$.
  Suppose that:
  \begin{enumerate}
  \item
    \label{Prop_ExistenceOfAssociatives_Orbifold_Sigma}
    $\Sigma$ is a closed $I_{\star,\hat\xi}$--holomorphic curve.
    $\Sigma \iso S^2$.
  \item
    \label{Prop_ExistenceOfAssociatives_Orbifold_Xi}
    $\xi \in \Lambda_\star$ is primitive.
    $\xi \in Z(G_\star)$.
  \item
    \label{Prop_ExistenceOfAssociatives_Orbifold_Rho}
    $\rho_\star(g)(\Sigma) = \Sigma$ for every $g \in G_\star$, and
    $\rho(\xi)|_\Sigma = \id_\Sigma$.
  \end{enumerate}
  In this situation,
  there is a constant $T_1 \in (0,T_0]$ and for every $t \in (0,T_1)$ there are at least $n_f$ distinct associative submanifolds in $(Y_t,\phi_t)$ representing the homology class $\beta \coloneq \eta_\star([P_{[0]}]) \in \rH_3(Y_t,\Z)$ with $\eta_\star$ as in \autoref{Rmk_HomologyMaps} and $P_{[0]} \subset \hat Y_{\star,t}$ as in  \autoref{Ex_Model_Associative}.
\end{prop}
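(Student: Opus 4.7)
\emph{Proof proposal.} The plan is to construct a Morse--Bott family $\bP_0 \co M_\star \to \sS(Y_t)$ of $\tilde\phi_t$--associatives modelled on \autoref{Ex_Model_Associative}, apply the moreover clause of \autoref{Prop_PerturbationMorseBottAssociatives} $H_\star$--equivariantly to obtain a perturbed family $\bP$ whose critical locus detects $\phi_t$--associatives, and then extract $n_f$ distinct critical points via \autoref{Rmk_Trivial}~\autoref{Rmk_Trivial_H} applied to the cyclic stabilisers of the $n_f$ singular points of $M_\star/H_\star$. For $t$ sufficiently small the compact curve $\Sigma$ sits inside $\hat X_\star \setminus (t\tau_\star)^{-1}\paren*{B_{R_\star}(0)/\Gamma_\star}$, so $P_{[\eta]} = \paren*{(\R\xi+\eta)\times\Sigma}/\Z\xi$ lies inside $\hat Y_{\star,t}^\circ \setminus \hat V_{\star,t} \subset Y_t$ (where $\tilde\phi_t = \hat\phi_{\star,t}$) and defines a smooth $\bP_0 \co M_\star \to \sS(Y_t)$ of $\tilde\phi_t$--associatives. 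Centrality of $\xi$ in $G_\star$ forces the rotational parts of all elements of $G_\star$ to fix $\xi$, which together with $\rho_\star(g)(\Sigma)=\Sigma$ ensures that representatives of a common $H_\star$--orbit in $M_\star$ yield the same submanifold of $Y_t$; thus $\bP_0\circ h = \bP_0$ and $\bP_0$ descends to an injective immersion $M_\star/H_\star \hookrightarrow \sS(Y_t)$. Since $\Sigma\cong\CP^1$ and $(\R\xi)^\perp$ restricts to a trivial rank--$2$ holomorphic bundle, $\rH^{0,1}(\Sigma,(\R\xi)^\perp)=0$; \autoref{Ex_Model_Associative_D} then gives $\ker D_{P_{[\eta]}} = (\R\xi)^\perp = \im T_{[\eta]}\bP_0$, confirming that $\bP_0$ is Morse--Bott in the sense of \autoref{Def_MorseBott}.

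Next, I pick a tubular neighborhood $\bm\jmath$ by selecting one tubular neighborhood per submanifold in $\im \bP_0$; this is automatically $H_\star$--equivariant for the trivial $H_\star$--action on $Y_t$. Working in the rescaled metric $t^{-2}\tilde g_t$ with $G_2$--structures $t^{-3}\tilde\phi_t$ and $t^{-3}\phi_t$, I check the hypotheses of \autoref{Prop_PerturbationMorseBottAssociatives}: (1) is uniform in $t$ by the rescaling; (2) is \autoref{Thm_PerturbG2Structure} with $\beta=3/2-\alpha$; (4) is \autoref{Prop_Associative_Linearisation+Remainder}; (5) follows from \autoref{Ex_Model_Associative_N}, which makes $\bT(0)$ essentially $x$--independent. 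For the crucial estimate (3), \autoref{Rmk_Model_Associative_D=DelT+A} rewrites $D$ as $(-i\oplus I_\xi)(\del_\xi+A)$, and \autoref{Prop_DelT+A} combined with $\ker D|_\sV = 0$ yields $\Abs{v}_{C^1C^{1,\alpha}} \leq c_2 t^{-1}\Abs{D^\perp v}_{C^1C^{0,\alpha}}$, since the circle factor has length $\sim L/t$ in $t^{-2}\tilde g_t$; hence $\gamma=1$. The condition $2\beta>\gamma$ becomes $3-2\alpha>1$, valid for $\alpha\in(0,1)$. The moreover clause of \autoref{Prop_PerturbationMorseBottAssociatives} then produces a $C^{1,\alpha}$--small, $H_\star$--invariant perturbation $\bP \co M_\star \to \sS(Y_t)$ such that zeros of $\omega\coloneq\bP^*(\delta\Upsilon^{\psi_t})$ correspond, via \autoref{Lem_Trivial}, to $\phi_t$--associative submanifolds $\bP(x) \subset Y_t$.

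Since $\bP\circ h = \bP$, the $1$--form $\omega$ is invariant under every subgroup of $H_\star$. \autoref{Rmk_Model_Associative_Possibilities} identifies the $n_f$ singularities of $M_\star/H_\star$ with points $x_1,\dotsc,x_{n_f}\in M_\star$ whose stabilisers $H_1,\dotsc,H_{n_f}\leq H_\star$ are non-trivial cyclic subgroups acting on $T_{x_i}M_\star\cong(\R\xi)^\perp$ as non-trivial rotations, so each $x_i$ is isolated in $M_\star^{H_i}$. Applying \autoref{Rmk_Trivial}~\autoref{Rmk_Trivial_H} with $H_i$ to the $H_i$--invariant form $\omega$ forces $\omega_{x_i}=0$, yielding a $\phi_t$--associative $P_i\coloneq\bP(x_i)$. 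Distinct $H_\star$--orbits of the $x_i$ together with injectivity of $M_\star/H_\star\hookrightarrow\sS(Y_t)$ make $\bP_0(x_i)$ pairwise distinct; $C^{1,\alpha}$--closeness of $\bP$ to $\bP_0$ (for $T_1\leq T_0$ small enough) preserves this, so $P_1,\dotsc,P_{n_f}$ are distinct, and each $P_i$ is connected to $P_{[0]}$ through the family $\bP$ inside $\hat Y_{\star,t}$, giving $[P_i]=\beta$. The main obstacle is the scaling loss of $t^{-1}$ in $\|(D^\perp)^{-1}\|$ caused by the $L/t$--length circle in the rescaled metric; that \autoref{Thm_PerturbG2Structure} delivers an error of order $t^{3/2-\alpha}$, strictly better than the required $t^{1/2}$, is precisely what makes the contraction-mapping argument close.
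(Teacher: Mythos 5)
Your proposal is correct and follows essentially the same route as the paper: the $H_\star$--invariant Morse--Bott family from \autoref{Ex_Model_Associative}, verification of the hypotheses of \autoref{Prop_PerturbationMorseBottAssociatives} via \autoref{Thm_PerturbG2Structure}, \autoref{Rmk_Model_Associative_D=DelT+A} and \autoref{Prop_DelT+A} (giving $\gamma = 1$ from the circle of length $\sim L/t$), \autoref{Prop_Associative_Linearisation+Remainder}, and \autoref{Ex_Model_Associative_N}, followed by \autoref{Lem_Trivial} together with \autoref{Rmk_Trivial}~\autoref{Rmk_Trivial_H} at the orbifold points of $M_\star/H_\star$. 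The only (harmless) deviations are your exponent $\beta = 3/2-\alpha$, which is what \autoref{Thm_PerturbG2Structure} directly supplies (the paper states $\beta = 5/2$), and your more careful application of the fixed-point argument to the individual stabilisers $H_i$ rather than to all of $H_\star$, which is indeed what is needed at singular points whose isotropy is a proper subgroup (e.g.\ in the $C_4$ and $C_6$ cases).
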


\begin{proof}
  For every $[\eta] \in M_\star/H_\star$ and $t \ll 1$,
  \autoref{Ex_Model_Associative} constructs a $t^{-3}\tilde\phi_t$--associative submanifold $P_{[\eta]} \imm \hat Y_{\star,t}^\circ \setminus \hat V_{\star,t}$.
  This defines an $H_\star$--invariant and $K$--equivariant Morse--Bott family $\bP_0 \co M_\star \to \sS$ of $t^{-3}\tilde \phi_t$--associative submanifolds; see \autoref{Ex_Model_Associative_D}.
  With respect to $t^{-2}\tilde g_t$ these submanifolds are isometric to $(\R/t^{-1}L\Z) \times \Sigma$.
  
  The hypotheses of \autoref{Prop_PerturbationMorseBottAssociatives} are satisfied for the choices $\phi_0 = t^{-3}\tilde\phi_t$, $\phi = t^{-3}\phi_t$, $\alpha \in (0,1/16)$, $\beta = 5/2$, $\gamma = 1$, and $c_1,c_2,c_3,c_4,c_5,R >0$ of secondary importance:  
  \autoref{Prop_PerturbationMorseBottAssociatives_Tube} holds for $0 < R \ll 1$.  
  \autoref{Prop_PerturbationMorseBottAssociatives_Error} holds by \autoref{Thm_PerturbG2Structure}.
  Because of \autoref{Thm_PerturbG2Structure} it suffices to verify \autoref{Prop_PerturbationMorseBottAssociatives_Linear}, \autoref{Prop_PerturbationMorseBottAssociatives_NonLinear}, and \autoref{Prop_PerturbationMorseBottAssociatives_T} with respect to $t^{-2}\tilde g_t$.
  \autoref{Rmk_Model_Associative_D=DelT+A} and \autoref{Prop_DelT+A} imply \autoref{Prop_PerturbationMorseBottAssociatives_Linear} with respect to $t^{-2}\tilde g_t$ because $\sV$ is defined to be the kernel of $\Pi$.
  Because of \autoref{Rmk_T-3Phi} (the proof of) \autoref{Prop_Associative_Linearisation+Remainder} implies \autoref{Prop_PerturbationMorseBottAssociatives_NonLinear}.
  For a suitable choice of $\bm\jmath$,
  \autoref{Prop_PerturbationMorseBottAssociatives_T} holds with respect to $t^{-2}\tilde g_t$ by direct inspection; cf.~\autoref{Ex_Model_Associative_N}.

  For $t \in (0,T_{1/2})$ the resulting $H_\star$--invariant map $\P \co M_\star \to \sS$ satisfies the hypothesis of \autoref{Lem_Trivial}.
  By \autoref{Rmk_Trivial}~\autoref{Rmk_Trivial_H},
  every isolated fixed-point of the action of $H_\star$ on $M_\star$ is a zero of $\bP^*(\delta\Upsilon^{\phi_t})$.    
  If $t < T_1 \ll 1$, then these map to $n_f$ pairwise distinct elements of $\sS$.
  By \autoref{Lem_Trivial}, each one of these is a $\phi_t$--associative submanifold.
\end{proof}

\begin{remark}
  \label{Rmk_MultipleCover_Unobstructed}
  If $x \in M_\star$ corresponds to an orbifold point $[x] \in M_\star/H_\star$,
  then $P_0 \coloneq \bP_0(x)$ and $\bP(x)$ are multiply covering and their deck transformation group contain the isotropy group $\Gamma$ of $[x]$.
  The embedded associative submanifold $\check P_0 \coloneq P_0/\Gamma$ is unobstructed;
  indeed:
  \begin{equation*}
    \ker D_{\check P_0} = (\ker D_{P_0})^\Gamma = (T_xM_\star)^\Gamma = 0.
  \end{equation*}
  Similarly, if $x \in M_\star$ is an isolated fixed-point of the action of $K$,
  then $P_0 \coloneq \bP_0(x)$ is preserved by the action of $K$ and
  \begin{equation*}
    \paren{\ker D_{P_0}}^K = (T_xM_\star)^K = 0.
  \end{equation*}
  This can be used to give a somewhat simpler proof of most of \autoref{Prop_ExistenceOfAssociatives_Orbifold} avoiding the use of \autoref{Prop_PerturbationMorseBottAssociatives}.
\end{remark}
  
\begin{example}
  \label{Ex_ExistenceOfAssociatives_C2}
  \citet[Examples 4, 5, 6]{Joyce1996a} constructs 7 examples of closed flat $\Gtwo$--orbifolds $(Y_0,\phi_0)$ whose singular set has components $S_\alpha$ ($\alpha \in A$).
  $A$ is a disjoint union $A = A^0 \amalg A^1$ with $A^1 \neq \emptyset$.
  For $\alpha \in A^0$, $S_\alpha$ is isometric to $T^3 = \R^3/\Z^3$.
  For $\alpha \in A^1$, $S_\alpha$ is isometric to $T^3/C_2$.
  Here is a more precise description.
  For $\alpha \in A$ set $\Gamma_\alpha \coloneq C_2$.
  For $\alpha \in A^0$ set $G_\alpha \coloneq \Lambda = \Span{i,j,k} < \Im\H$ and denote by $\rho_\alpha \co G_\alpha \to \Isom(\H/\Gamma_\alpha)$ the trivial homomorphism.
  For $\alpha \in A^1$   
  let $G_\alpha < \SO(\Im\H)\ltimes\Im \H$ be generated by $\Lambda$ and $(R_2,\frac{i}2)$, and
  define $\rho_\alpha \co G_\alpha \to G_\alpha/\Lambda \to \SO(\H)^{\Gamma_\alpha} \incl \Isom(\H/\Gamma_\alpha)$ by
  \begin{equation*}
    \rho_\alpha\paren[\big]{R_2,\tfrac{i}2}[q] \coloneq [-iqi].
  \end{equation*}
  For every $\alpha \in A$ there is an open embedding
  $\jmath_\alpha \co \paren[\big]{\Im \H \times \paren{B_{R_\alpha}(0)/\Gamma_\alpha}}/G_\alpha \to Y_0$
  as in \autoref{Def_ResolutionData}~\autoref{Def_ResolutionData_Model}.

  These can be extended to resolution data $\fR$ for $(Y_0,\phi_0)$ with the aid of the Gibbons--Hawking construction discussed in \autoref{Rmk_AkALE}.  
  According to \autoref{Rmk_AkALE}~\autoref{Rmk_AkALE_Decay}, $(X_{\bm{0}},\bomega_{\bm{0}})$ is $\H/C_2$ with the standard hyperkähler form.
  If $\bzeta = [\zeta,-\zeta] \in \Delta^\circ$,
  then $(X_\bzeta,\bomega_\zeta)$ is a hyperkähler manifold and \autoref{Rmk_AkALE}~\autoref{Rmk_AkALE_Decay} provides $\tau \co X_{\bzeta}\setminus K_\bzeta \to X_{\bm{0}}$.
  Therefore, completing the resolution data for $\alpha \in A^0$ amounts to a choice of $\zeta_\alpha \in \Delta^\circ$
  
  For $\alpha \in A^1$ the situation is slightly complicated by the fact that $\rho_\alpha$ is non-trivial.
  The involution $R(q) \coloneq -iqi$ lies in $\SO(\H)^{\Gamma_\alpha}$ and $\Lambda^+R = R_2$.
  By \autoref{Rmk_AkALE}~\autoref{Rmk_AkALE_Symmetry},
  requiring that $R$ lifts to $X_\bzeta$ imposes the constraint that $\bzeta_\alpha \in (\Delta^\circ)^{R_2}$.
  Therefore, completing the resolution data for $\alpha \in A^1$ amounts to a choice of $\bzeta_\alpha \in  (\Delta^\circ)^{R_2}$.
  A moment's thought shows that
  \begin{equation*}
    (\Delta^\circ)^{R_2}
    =
    \set[\big]{ [\zeta,-\zeta] \in \Delta^\circ : \zeta \in \R i \cup (\R i)^\perp }.
  \end{equation*}
  If $\bzeta = [\zeta,-\zeta]$ with $\zeta \in \R i$,
  then the segment joining $\zeta$ and $-\zeta$ lifts to an $I_i$--holomorphic curve $\Sigma \iso S^2$.
  Therefore, for the corresponding choices of $\fR$,
  \autoref{Prop_ExistenceOfAssociatives_Orbifold} with $\hat\xi = i$ and $L = 1$ exhibits $4$ associative submanifolds in $(Y_t,\phi_t)$ for every $t \in (0,T_1)$.
\end{example}

\begin{example}
  \label{Ex_ExistenceOfAssociatives_C3}
  \citet[Examples 15, 16]{Joyce1996a} constructs two examples of closed flat $\Gtwo$--orbifolds $(Y_0,\phi_0)$ whose singular set has components $S_\alpha$ ($\alpha \in A$).
  $A$ is a disjoint union $A = A^0 \amalg A^1$ with $A^1 = \set{\star}$.
  The situation is analogous to that in \autoref{Ex_ExistenceOfAssociatives_C2} except that $\Gamma_\star \coloneq C_3$.

  Completing the resolution data for $\star$ amounts to a choice of $\bzeta_\star \in (\Delta^\circ)^{R_2}$.
  A moment's thought shows that
  \begin{equation*}
    (\Delta^\circ)^{R_2}
    =
    \set[\big]{
      [\zeta_1,\zeta_2,\zeta_3] \in \Delta^\circ
      :
      \zeta_1 \in \R i ~\textnormal{and}~ R_2\zeta_2 = \zeta_3
    }
    \cup
    \set[\big]{
      [\zeta_1,\zeta_2,\zeta_3] \in \Delta^\circ
      :
      \zeta_1,\zeta_2,\zeta_3 \in \R i
    }.
  \end{equation*}
  If $\bzeta = [\zeta_1,\zeta_2,\zeta_3]$ is in the latter component and $\zeta_2$ is contained the the segment joining $\zeta_1$ and $\zeta_2$,
  then the segment joining $\zeta_1$ and $\zeta_2$ and the segment joining $\zeta_2$ and $\zeta_3$ lift to $I_{\star,i}$--holomorphic curves $\Sigma_1,\Sigma_2 \iso S^2$ and \autoref{Prop_ExistenceOfAssociatives_Orbifold}~\autoref{Prop_ExistenceOfAssociatives_Orbifold_Xi} holds.
  Therefore,
  for the corresponding choices of $\fR$,
  \autoref{Prop_ExistenceOfAssociatives_Orbifold} with $\hat\xi = i$ and $L = 1$ exhibits $8 = 2\cdot 4$ associative submanifolds in $(Y_t,\phi_t)$ for every $t \in (0,T_1)$.  
\end{example}

\begin{example}
  \label{Ex_ExistenceOfAssociatives_C2'}
  \citet[§5.3.4]{Reidegeld2017} constructs an example of a closed flat $\Gtwo$--orbifold $(Y_0,\phi_0)$ whose singular set has $16$ components $S_\alpha$ ($\alpha \in A$).
  For every $\alpha \in A$, $S_\alpha$ is isometric to $T^3/C_2^2$.
  Here is a more precise description.
  For $\alpha \in A$
  set $\Gamma_\alpha \coloneq C_2$,
  let $G_\alpha < \SO(\Im\H)\ltimes\Im \H$ be generated by $\Lambda \coloneq \Span{i,j,k}$, $(R_+,\frac{i+k}2)$, and $(R_-,\frac{j}{2})$,
  define $\rho_\alpha \co G_\alpha \to G_\alpha/\Lambda \to \SO(\H)^{\Gamma_\alpha} \incl \Isom(\H/\Gamma_\alpha)$ by
  \begin{equation*}
    \rho_\alpha\paren[\big]{R_+,\tfrac{i+k}2}[q] \coloneq [iqi] \qandq
    \rho_\alpha\paren[\big]{R_-,\tfrac{j}2}[q] \coloneq [jqj].
  \end{equation*}
  These act on $\Im\H$ as $R_+$ and $R_-$.
  For every $\alpha \in A$ there is an open embedding
  $\jmath_\alpha \co \paren[\big]{\Im \H \times \paren{B_{R_\alpha}(0)/\Gamma_\alpha}}/G_\alpha \to Y_0$
  as in \autoref{Def_ResolutionData}~\autoref{Def_ResolutionData_Model}.

  Completing the resolution data for $\alpha \in A$ amounts to a choice of $\bzeta_\alpha \in (\Delta^\circ)^{R_+,R_-}$.
  A moment's thought shows that
  \begin{equation*}
    (\Delta^\circ)^{R_+,R_-} = \set[\big]{ [\zeta,-\zeta] \in \Delta^\circ : \zeta \in \R i }.
  \end{equation*}
  Therefore,
  for \emph{every choice} of $\fR$,
  \autoref{Prop_ExistenceOfAssociatives_Orbifold} with $\hat\xi = i$ and $L = 1$ exhibits $32 = 16 \cdot 2$ associative submanifolds in $(Y_t,\phi_t)$ for every $t \in (0,T_1)$.
\end{example}

\begin{example}
  \label{Ex_ExistenceOfAssociatives_Dic2}
  Here is an example that involves non-cyclic $\Gamma$ and requires the use of \autoref{Rmk_Kronheimer_ALE}.
  \citet[§5.3.4]{Reidegeld2017} constructs an example of a closed flat $\Gtwo$--orbifold $(Y_0,\phi_0)$ whose singular set has $7$ components $S_\alpha$ ($\alpha \in A$).
  The situation is analogous to that in \autoref{Ex_ExistenceOfAssociatives_C2'} except that $A = A^1 \amalg A^2 \amalg A^3$ and
  \begin{equation*}
    \Gamma_\alpha \coloneq
    \begin{cases}
      C_2 & \textnormal{if}~\alpha \in A^1 \\
      C_4 & \textnormal{if}~\alpha \in A^2 \\
      \Dic_2 & \textnormal{if}~\alpha \in A^3.
    \end{cases}
  \end{equation*}  

  Completing the resolution data for $\alpha \in A^1$ is identical to \autoref{Ex_ExistenceOfAssociatives_C2'}.
  For $\alpha \in A^2$ a moment's thought shows that
  \begin{align*}
    (\Delta^\circ)^{R_+,R_-}
    &
      = \set[\big]{ [\zeta,R_+\zeta,R_-\zeta,R_+R_-\zeta] \in \Delta^\circ : \zeta \notin \R i \cup \R j \cup \R k} \\
    &
      \cup
      \set[\big]{ [\zeta_1,\zeta_2,-\zeta_1,-\zeta_2] \in \Delta^\circ : \zeta_1,\zeta_2 \in \R i \cup \R j \cup \R k}.
  \end{align*}
  If $\bzeta$ is in the latter component,
  then $X_\bzeta$ contains $I_{\bzeta,\hat\xi_a}$--holomorphic curves $\Sigma_a \iso S^2$ with $\hat\xi_a = \zeta_a/\abs{\zeta_a}$ ($a=1,2$).
  (In fact, there are more.)
  Therefore,
  for the corresponding choices of $\fR$,
  \autoref{Prop_ExistenceOfAssociatives_Orbifold} with $\hat\xi_a = i$ and $L = 1$ exhibits $4 = 2\cdot 2$ associative submanifolds in $(Y_t,\phi_t)$ for every $t \in (0,T_1)$.  

  To understand the situation for $\alpha \in A^3$,
  recall that the $D_4$ root system is
  \begin{equation*}
    \Phi = \set[\big]{ \pm e_a \pm e_b \in \R^4 : a \neq b \in \set{1,2,3,4} }.
  \end{equation*}
  The standard choice of simple roots is
  \begin{equation*}
    \alpha_1 \coloneq e_1-e_2, \quad
    \alpha_2 \coloneq e_2-e_3, \quad
    \alpha_3 \coloneq e_3-e_4, \qandq
    \alpha_4 \coloneq e_3+e_4.
  \end{equation*}
  The Weyl group $W = S^4 \ltimes C_2^3$ acts by permuting and flipping the signs on an even number of the coordinates of $\R^4$.
  Therefore,
  \begin{equation*}
    \Delta^\circ
    =
    \set[\big]{ [\zeta_1,\zeta_2,\zeta_3,\zeta_4] \in (\Im\H \otimes \R^4)/W : \zeta_a \neq \pm \zeta_b ~\textnormal{for}~ a \neq b \in \set{1,2,3,4} }.
  \end{equation*}
  A little computation reveals that
  \begin{align*}
    (\Delta^\circ)^{R_+,R_-}
    &
      =
      \set[\big]{
      [\zeta,R_+\zeta,R_-\zeta,R_+R_-\zeta] \in \Delta^\circ
      :
      \zeta \notin (\R i)^\perp \cup (\R j)^\perp \cup (\R k)^\perp
      } \\
    &
      \cup
      \set[\big]{
      [\zeta_1,\zeta_2,-\zeta_1,-\zeta_2] \in \Delta^\circ
      :
      \zeta_1,\zeta_2\in {(\R i)^\perp \cup (\R j)^\perp \cup (\R k)^\perp} \setminus \paren{\R i \cup \R j \cup \R k}
      } \\
    &
      \cup
      \set[\big]{
      [\zeta_1,\zeta_2,\zeta_3,\zeta_4] \in \Delta^\circ
      :
      \zeta_1,\zeta_2,\zeta_3,\zeta_4 \in \R i
      ~\textnormal{or}~
      \zeta_1,\zeta_2,\zeta_3,\zeta_4 \in \R j
      ~\textnormal{or}~
      \zeta_1,\zeta_2,\zeta_3,\zeta_4 \in \R k
      }.
  \end{align*}
  If $\bzeta = [\zeta_1,\zeta_2,\zeta_3,\zeta_4]$ is in the latter component,
  then, by \autoref{Rmk_Kronheimer_ALE}~\autoref{Rmk_Kronheimer_ALE_HolomorphicCurves},
  $X_\bzeta$ contains $4$ $I_{\bzeta,\hat\xi}$--holomorphic curves $\Sigma_a \iso S^2$ ($a \in \set{1,2,3,4}$) with $\hat\xi \coloneq \zeta_1/\abs{\zeta_1}$.
  Therefore,
  for the corresponding choices of $\fR$,
  \autoref{Prop_ExistenceOfAssociatives_Orbifold} with $\hat\xi = i$ and $L = 1$ exhibits $8 = 4\cdot 2$ associative submanifolds in $(Y_t,\phi_t)$ for every $t \in (0,T_1)$.
\end{example}

\begin{remark}
  \citet[§5.3.4 and §5.3.5]{Reidegeld2017} constructed further examples of closed flat $\Gtwo$--orbifolds $(Y_0,\phi_0)$ whose singular sets are isometric to $T^3$, $T^3/C_2$, and $T^3/C_2^2$ and whose transverse singularity models are $\H/\Gamma$ with $\Gamma \in \set{C_2,C_3,C_4,C_6,\Dic_2,\Dic_3,2T}$.
  The reader might enjoy analysing these examples with the methods used above.
\end{remark}

Here is a construction technique based on \autoref{Prop_PerturbationMorseBottAssociatives} and \autoref{Rmk_Trivial}~\autoref{Rmk_Trivial_Exact}.

\begin{prop}
  \label{Prop_ExistenceOfAssociatives_K=>Exact}
  Let $\fR = (\Gamma_\alpha,G_\alpha,\rho_\alpha;R_\alpha,\jmath_\alpha;\hat X_\alpha,\hat\bomega_\alpha,\hat\rho_\alpha,\tau_\alpha;\lambda_\alpha,\kappa_\alpha,\hat\kappa_\alpha)_{\alpha\in A}$ be $K$--equivariant resolution data for a closed flat $\Gtwo$--orbifold $(Y_0,\phi_0)$ together with a homomorphism $\lambda \co K \to \Diff(Y_0)$ with respect to which $\phi_0$ is $K$--invariant.
  Denote by $(Y_t,\phi_t)_{t \in (0,T_0)}$ the family of closed $\Gtwo$--manifolds obtained from the $K$--equivaraint generalised Kummer construction discussed in \autoref{Rmk_GeneralisedKummer_KInvariant}.
  Let $\star \in A$, $\hat \xi \in S^2 \subset \Im \H$, $L > 0$, and $\Sigma \subset X_\star$.
  Set $\xi \coloneq L\hat\xi$, $\Lambda_\star \coloneq G_\star \cap \Im \H < \Im \H$, and $M_\star \coloneq \paren{\Im\H/\R\xi}/\paren{\Lambda_\star/\Z\xi}$.
  Denote by $\bI_\star$ the hypercomplex structure on $X_\star$.
  Suppose that \autoref{Prop_ExistenceOfAssociatives_Orbifold_Sigma}, \autoref{Prop_ExistenceOfAssociatives_Orbifold_Xi}, and \autoref{Prop_ExistenceOfAssociatives_Orbifold_Rho} in \autoref{Prop_ExistenceOfAssociatives_Orbifold} hold; and moreover:
  \begin{enumerate}
    \setcounter{enumi}{3}
  \item    
    \label{Prop_ExistenceOfAssociatives_K=>Exact_K}
    $g\star = \star$ for every $g \in K$.
    $\kappa_\star(K) < N_{\SO(\Im\H)\ltimes\Im\H}(\Z\xi)$ and
    $\hat\kappa_\star(g)(\Sigma) = \Sigma$ for every $g \in K$.
  \item
    \label{Prop_ExistenceOfAssociatives_K=>Exact_Vanishing}
    $\Hom(\pi_1(M_\star),\R)^K = 0$.
  \end{enumerate}
  In this situation,
  there is a constant $T_1 \in (0,T_0]$ and for every $t \in (0,T_1)$ there are at least $3$ distinct associative submanifolds in $(Y_t,\phi_t)$ representing the homology class $\beta \coloneq \eta_\star([P_{[0]}]) \in \rH_3(Y_t,\Z)$ with $\eta_\star$ as in \autoref{Rmk_HomologyMaps} and $P_{[0]} \subset \hat Y_{\star,t}$ as in  \autoref{Ex_Model_Associative}.
\end{prop}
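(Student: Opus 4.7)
My plan is to closely follow the proof of \autoref{Prop_ExistenceOfAssociatives_Orbifold}, replacing only the invocation of \autoref{Rmk_Trivial}~\autoref{Rmk_Trivial_H} by that of \autoref{Rmk_Trivial}~\autoref{Rmk_Trivial_Exact}. First, I would construct the Morse--Bott family $\bP_0 \co M_\star \to \sS$ of $t^{-3}\tilde\phi_t$--associative submanifolds $P_{[\eta]} \imm \hat Y_{\star,t}^\circ \setminus \hat V_{\star,t}$ using \autoref{Ex_Model_Associative} and \autoref{Ex_Model_Associative_D} for $t \ll 1$. Hypothesis \autoref{Prop_ExistenceOfAssociatives_K=>Exact_K} ensures that $\kappa_\star$ descends to an action of $K$ on $M_\star$ (since it normalises $\Z\xi$) and that $\bP_0$ is $K$--equivariant with respect to the $K$--action on $Y_t$ furnished by \autoref{Rmk_GeneralisedKummer_KInvariant}. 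Applying the $K$--equivariant version of \autoref{Prop_PerturbationMorseBottAssociatives} with $\phi_0 = t^{-3}\tilde\phi_t$ and $\phi = t^{-3}\phi_t$ then yields a $K$--equivariant map $\bP \co M_\star \to \sS$ satisfying the hypothesis of \autoref{Lem_Trivial} with respect to $\phi_t$; the verification of \autoref{Prop_PerturbationMorseBottAssociatives_Tube}--\autoref{Prop_PerturbationMorseBottAssociatives_T} is verbatim the same as in the proof of \autoref{Prop_ExistenceOfAssociatives_Orbifold}.

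The new ingredient is to show that $\bP^*(\delta\Upsilon^{\psi_t})$ is exact on $M_\star$. Since $\phi_t$ and hence $\psi_t$ are $K$--invariant by \autoref{Rmk_GeneralisedKummer_KInvariant} and $\bP$ is $K$--equivariant, this closed $1$--form is $K$--invariant. By \autoref{Rmk_Trivial}~\autoref{Rmk_Trivial_Exact}, its exactness is equivalent to the vanishing of the composite
\begin{equation*}
  \pi_1(M_\star) \xrightarrow{\pi_1(\bP)} \pi_1(\sS) \xrightarrow{\textnormal{sweep}} \rH_4(Y_t) \xrightarrow{\Inner{-,[\psi_t]}} \R.
\end{equation*}
All three arrows are $K$--equivariant (with $K$ acting trivially on $\R$ because $[\psi_t]$ is $K$--invariant), so this composite lies in $\Hom(\pi_1(M_\star),\R)^K$, which vanishes by hypothesis \autoref{Prop_ExistenceOfAssociatives_K=>Exact_Vanishing}.

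Since $M_\star \iso T^2$ is closed and not homeomorphic to a sphere, \autoref{Rmk_Trivial}~\autoref{Rmk_Trivial_Exact} guarantees at least three zeros of $\bP^*(\delta\Upsilon^{\psi_t})$, each corresponding via \autoref{Lem_Trivial} to a $\phi_t$--associative submanifold of $Y_t$. For $t \leq T_1 \ll T_0$ the map $\bP$ is a $C^{1,\alpha}$--small perturbation of the injective immersion $\bP_0$, hence itself an injective immersion, so the three zeros produce three genuinely distinct elements of $\sS$; all three lie in the homology class $\beta = \eta_\star([P_{[0]}])$ by continuity of the family in $(t,x) \in (0,T_1) \times M_\star$.

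The main obstacle is not the cohomological exactness argument---which is essentially formal once $K$--equivariance has been arranged---but in propagating $K$--equivariance through the contraction mapping in \autoref{Prop_PerturbationMorseBottAssociatives} and in assembling a genuinely $K$--equivariant tubular neighborhood $\bm\jmath$ of $\bP_0$. Fortunately \autoref{Prop_PerturbationMorseBottAssociatives} already records the equivariance of its output in its statement, and a $K$--equivariant tubular neighborhood can be obtained by averaging an arbitrary one over $K$ and shrinking, so these points introduce no genuinely new difficulties beyond those already present in \autoref{Prop_ExistenceOfAssociatives_Orbifold}.
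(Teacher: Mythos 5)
Your proposal is correct and follows essentially the same route as the paper's own proof: construct the $K$--equivariant Morse--Bott family and its perturbation as in \autoref{Prop_ExistenceOfAssociatives_Orbifold}, observe that the composite $\pi_1(M_\star) \to \pi_1(\sS) \to \rH_4(Y_t) \to \R$ is $K$--equivariant and hence vanishes by hypothesis \autoref{Prop_ExistenceOfAssociatives_K=>Exact_Vanishing}, and conclude via \autoref{Rmk_Trivial}~\autoref{Rmk_Trivial_Exact} that there are at least $3$ zeros since $M_\star \iso T^2$ is not a sphere. The extra details you supply on distinctness of the resulting submanifolds and on averaging the tubular neighborhood are glossed over in the paper but are consistent with its argument.
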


\begin{proof}
  The proof is very similar to that of \autoref{Prop_ExistenceOfAssociatives_Orbifold}.
   The additional hypothesis \autoref{Prop_ExistenceOfAssociatives_K=>Exact_K} guarantees that $K$ acts on $M_\star$ and that the map $\bP_0$ is $K$--equivariant.
   Therefore, $\bP$ is $K$--equivariant as well.
   According to \autoref{Rmk_Trivial}~\autoref{Rmk_Trivial_Exact},
   the obstruction to $\bP^*(\delta\Upsilon^{\phi_t})$ being exact is the composite homomorphism
   \begin{equation*}
     \pi_1(M_\star) \xrightarrow{\pi_1(\bP)} \pi_1(\sS) \xrightarrow{\textnormal{sweep}} \rH_4(Y_t) \xrightarrow{\Inner{-,[\psi_t]}} \R.
   \end{equation*}
   The first two homomorphisms are manifestly $K$--equivariant.
   The third homomorphism is $K$--equivariant because $\psi_t$ is $K$--invariant
   (see \autoref{Rmk_GeneralisedKummer_KInvariant}).
   By \autoref{Prop_ExistenceOfAssociatives_K=>Exact_Vanishing},
   the composition vanishes.
   Therefore, $\bP^*(\delta\Upsilon^{\phi_t})$ is exact.
   Since $M_\star \not\iso S^2$, $\bP^*(\delta\Upsilon^{\phi_t})$ has at least $3$ zeros.
\end{proof}
  
\begin{example}
  \label{Ex_ExistenceOfAssociatives_K}
  Set $T^7 \coloneq \R^7/\Z$.
  Define the torsion-free $\Gtwo$--structure $\phi_0$ by
  \begin{gather*}
    \phi_0 \coloneq
    \rd x_1 \wedge \rd x_2 \wedge \rd x_3
    - \sum_{a=1}^3 \rd x_a \wedge \omega_a \quad\textnormal{with} \\
    \omega_1 \coloneq \rd x_4 \wedge \rd x_5 + \rd x_6 \wedge \rd x_7, \quad
    \omega_2 \coloneq \rd x_4 \wedge \rd x_6 + \rd x_7 \wedge \rd x_5, \quad
    \omega_3 \coloneq \rd x_4 \wedge \rd x_7 + \rd x_5 \wedge \rd x_6.
  \end{gather*}  
  Define $\iota_1,\iota_2,\iota_3,\lambda \in \Isom(T^7)$ by
  \begin{align*}
    \iota_1[x_1,\ldots,x_7] &\coloneq \sqparen{x_1,x_2,x_3,-x_4,-x_5,-x_6,-x_7}, \\
    \iota_2[x_1,\ldots,x_7] &\coloneq \sqparen{x_1,-x_2,-x_3,x_4,x_5,\tfrac12-x_6,-x_7}, \\
    \iota_3[x_1,\ldots,x_7] &\coloneq \sqparen{-x_1,x_2,-x_3,x_4,\tfrac12-x_5,x_6,\tfrac12-x_7}, \qand \\
    \lambda[x_1,\ldots,x_7] &\coloneq \sqparen{x_1,-x_2,-x_3,x_4,x_5,-x_6,-x_7}.
  \end{align*}
  $(Y_0 \coloneq T^7/\Span{\iota_1,\iota_2,\iota_3},\phi_0)$ is the closed flat $\Gtwo$--oribfold from \cite[Example 3]{Joyce1996a}.
  Its singular set has $12 = 3\cdot 4$ components $S_\alpha$ ($\alpha \in A = A^1 \amalg A^2 \amalg A^3$).
  Here $A^a$ groups those components arising from the fixed-point set of $\iota_a$.
  The situation is analogous to that in \autoref{Ex_ExistenceOfAssociatives_C2} except that, for every $\alpha \in A$, $S_\alpha$ isometric to $T^3$ and $G_\alpha \coloneq \Lambda = \Span{i,j,k} < \Im \H$.
  
  The involution $\lambda$ decends to $Y_0$:
  it can be identified with an action of $C_2$ on $Y_0$ as in \autoref{Rmk_GeneralisedKummer_KInvariant}.
  The induced action on $A$ fixes the elements of $A^1$ and permutes those of $A^2$ and $A^3$. 
  Completing the $C_2$--equivariant resolution data for $\alpha \in A^2 \amalg A^3$ presents no difficult.
  For $\alpha \in A^1$,
  $\lambda_\alpha = R_2$ and $\kappa_\alpha(q) = -iqi$ as in \autoref{Ex_ExistenceOfAssociatives_C2}.
  Therefore, completing the resolution data for $\alpha \in A^1$ amounts to a choice of
  \begin{equation*}
    \bzeta_\alpha
    \in
    (\Delta^\circ)^{R_2}
    =
    \set[\big]{ [\zeta,-\zeta] \in \Delta^\circ : \zeta \in \R i \cup (\R i)^\perp }.
  \end{equation*}
  If $\bzeta_\alpha = [\zeta_\alpha,-\zeta_\alpha]$ with $\zeta_\alpha \in \R i$,
  then the hypotheses of \autoref{Prop_ExistenceOfAssociatives_K=>Exact} satisfied with $\hat\xi = i$, $L = 1$, and $\Sigma$ as in \autoref{Ex_ExistenceOfAssociatives_C2};
  indeed: $C_2$ acts $T^2$ by $[x_2,x_3] \mapsto [-x_2,x_3]$;
  hence: $\Hom(\pi_1(T^2),\R)^{C_2} = 0$.
  This exhibits upto $12 = 4 \cdot 3$ associative submanifolds in $(Y_t,\phi_t)$ for every $t \in (0,T_1)$ depending on the choice of $C_2$--equivariant resolution data.
\end{example}

\begin{remark}
  \label{Rmk_JoyceKarigiannis_Potential}  
  If $X$ is a $K3$ surface with a non-symplectic involution $\tau$,
  then the fixed-point locus $X^\tau$ (typically) contains a surface of genus $g \neq 1$ \cite[§4]{Nikulin1983}.
  The twisted connected sum construction \cite{Kovalev2003,Kovalev2011,Corti2012a}---in fact: a trivial version of thereof---produces closed $\Gtwo$--orbifolds $(Y_0,\phi_0)$ from a matching pairs $(\Sigma_\pm,\tau_\pm)$ of non-symplectic $K3$ surfaces.
  The singular set of $Y_0$ is $S^1 \times M$ with $M \coloneq X_+^\tau \cup X_-^\tau$ and the transverse singularity model is $\H/C_2$.
  An extension of the generalised Kummer construction due to \citet{Joyce2017} resolves $Y_0$ into a family $(Y_t,\phi_t)_{t\in(0,T_0)}$ of closed $\Gtwo$--manifolds.
  It seems plausible that an extension of the techniques in the present article could produce $\bP \co M \to \sS$ satisfying the hypothesis of \autoref{Lem_Trivial}.
  Since (typically) $\chi(M) \neq 0$,
  this would produce associatives in \citeauthor{Joyce2017}'s $\Gtwo$--manifolds.
\end{remark}

\begin{remark}
  \label{Rmk_Coassociatives}
  It is also possible to construct coassociative submanifolds in $\Gtwo$--manifolds obtained from the generalised Kummer construction using similar techniques.
  In fact, the situation is quite a bit simpler because the deformation theory of coassociative submanifolds is always unobstructed \cite[§4]{McLean1998}.
\end{remark}


\printreferences

\end{document}
